\documentclass[12pt]{amsart}
\usepackage[usenames,dvipsnames]{xcolor}
\usepackage{tikz}
\usepackage{fancyhdr}
\usepackage{txfonts}
\usepackage{graphicx}
\usepackage{epsfig}
\usepackage{mathrsfs}
\usepackage{amssymb}
\usepackage{latexsym}
\usepackage{amsmath}
\usepackage{amsfonts}
\usepackage{amsbsy}
\usepackage{amscd}
\usepackage{subfigure}
\usepackage{verbatim}
\usepackage{color,tikz}
\usetikzlibrary{calc}

\usepackage{amsthm}
\usepackage{amsfonts}
\usepackage{amsbsy,calc}
\usepackage{graphicx,ifthen,cite}
\usepackage{bm}
\usepackage[normalem]{ulem}

\newcommand{\D}{{\mathcal D}}

\newcommand{\R}{{\mathbb R}}
\newcommand{\Z}{{\mathbb Z}}

\newcommand{\N}{{\mathbb N}}

\newtheorem{prop}{Proposition}[section]
\newtheorem{lem}[prop]{Lemma}
\newtheorem{ques}[prop]{Question}
\newtheorem{defi}[prop]{Definition}
\newtheorem{coro}[prop]{Corollary}
\newtheorem{theo}[prop]{Theorem}

\newtheorem{exam}[prop]{Example}
\newtheorem{rema}[prop]{Remark}

\newif\ifdraft
\drafttrue
\ifdraft
\else
\fi
\numberwithin{equation}{section}
\title{}
\author{}
\date{}

\begin{document}
	\baselineskip 18pt
	
	\title[Primes are Complete for a Class of $N$-Bernoulli Convolutions Spectral Pair]{Primes are Complete for a Class of $N$-Bernoulli Convolutions Spectral Pair}
	\author[Z.-C. CHi]{Zi-Chao Chi}
	\address{School of Mathematics and Statistics, Key Lab NAA-MOE and Hubei Key Lab-Math. Sci., Central China Normal University, Wuhan 430079, P. R. China}
	\email{skychizichao@163.com}
	\author[J.-F. Lu]{Jian-Feng Lu}
	\address{School of Mathematics and Statistics, Key Lab NAA-MOE and Hubei Key Lab-Math. Sci., Central China Normal University, Wuhan 430079, P. R. China}
	\email{lujianfengmath@163.com}
	\author[H.-X. Wang]{He-Xiang Wang*}
	\address{School of Mathematics and Statistics, Key Lab NAA-MOE and Hubei Key Lab-Math. Sci., Central China Normal University, Wuhan 430079, P. R. China}
	\email{wanghex9@163.com}
	\thanks{*Corresponding author.}
	\thanks{This work was supported by the National Natural Science Foundation of China 12371087, 12301105 and 12361015}
	\subjclass[2020]{Primary 28A80, 42C05; Secondary 11A07, 42A65}
	\keywords{complete number, Fourier orthonormal basis, $N$-Bernoulli convolution, Self-similar measure}

		\begin{abstract}
			We study the complete number problem for a class of self-similar spectral measures on the real line.  For a spectral pair $(\mu,\Lambda)$, a real number
			$t$ is called complete if $t\Lambda$ is also a spectrum of $\mu$.  In this paper we consider the $N$-Bernoulli convolution $\mu_{N^r,\mathcal D},
			\mathcal D=\{0,1,\ldots,N-1\},$
			together with its spectrum
			$\Lambda_{N^r,N^{r-1}\mathcal D}.$ Our main result establishes that every prime number, apart from certain trivial cases, is complete.
	\end{abstract}
	\maketitle
	
	\section{Introduction}
$N$-Bernoulli convolutions $\mu_{b,\mathcal{D}}$ is the self-similar measure generated by the \textit{iterated function system} (IFS) (see, e.g., \cite{Hut1981})
\begin{equation*}
	\{\phi_d(x)=b^{-1}(x+d) : d \in \mathcal{D}\},
\end{equation*}
where $b, N \ge 2$ are integers and the digit set is $\mathcal{D}=\{0, 1, \dots, N-1\}$. This measure can be represented as the infinite convolution of discrete measures in the weak sence:
\begin{equation}\label{eqmeaweak}
	\mu_{b,\mathcal{D}} = \delta_{b^{-1}\mathcal{D}} \ast \delta_{b^{-2}\mathcal{D}} \ast \dots,
\end{equation}
where $\delta_{E} := \frac{1}{\#E} \sum_{e \in E} \delta_e$ denotes the normalized counting measure on a finite set $E$.

\begin{defi}
	We say that a countable set $\Lambda \subset \mathbb{R}$ is a \textbf{spectrum} for a Borel probability measure $\mu$ if the corresponding family of exponential functions $E(\Lambda) := \{ \mathrm{e}^{2\pi \mathrm{i} \lambda x} : \lambda \in \Lambda \}$ forms an orthonormal basis for the Hilbert space $L^2(\mu)$. In this case, $\mu$ is called a \textbf{spectral measure}, and $(\mu, \Lambda)$ is referred to as a \textbf{spectral pair}.
\end{defi}

It is well-established that the measure $\mu_{b,\mathcal{D}}$ is spectral if and only if $N \mid b$ (see, e.g.,\cite{Dai2012,Dai2013,Deng2014, DHL2014,HL2008,JP1998}). Let $b = qN$ for some positive integer $q$. To exclude the trivial case where $q=1$ (in which $\mu_{b,\mathcal{D}}$ reduces to the Lebesgue measure on $[0,1]$), we focus on the case $q \ge 2$, where $\mu_{b,\mathcal{D}}$ is a singular continuous measure. Under this condition, $\mu_{b,\mathcal{D}}$ possesses a canonical spectrum:
\begin{equation}
	\Lambda_{b, q\mathcal{D}}= \bigcup_{n=1}^{\infty} \left( q\mathcal{D} + bq\mathcal{D} + \dots + b^{n-1}q\mathcal{D} \right).
\end{equation}

A fundamental problem in spectral measure theory is to characterize the scaling factors $t$ that preserve the spectral property of a measure. To facilitate our study, we introduce the following definition regarding the scaling factor $t$:
\begin{defi}
	Let $(\mu, \Lambda)$ be a given spectral pair. A real number $t$ is called \textbf{complete} for the spectral pair $(\mu,\Lambda)$ if $t\Lambda$ is also a spectrum of $\mu$. Otherwise, $t$ is called \textbf{incomplete} for the spectral pair $(\mu,\Lambda)$.
\end{defi}
It is worth noting that $t$ is referred to as {\it eigenvalue} or {\it scaling number} in some other works. The complete number problem is cannot established for the Lebesgue measure restricting to a compact set. {However for singularly (with respect to the Lebesgue measure) spectral measures, the complete number problem has caused many interesting phenomena. For instance, the mock Fourier series of continuous functions associated to the spectrum $\Lambda_{4,\{0,1\}}$ converges uniformly on the support of $\mu_{4, \{0,2\}}$ \cite{Str2006}. But Dutkay, Han and Sun \cite{DHS2014} proved that there exist continuous functions such that its mock Fourier series associated to the spectrum $t\Lambda$ diverges at $0$ for complete numbers $t=17$, $23$, $29$, $\ldots$. For singular measures, the complete number problem is worth studying.} Many researchers have investigated the problem of complete numbers; see, for example, \cite{Ai2021,ADH2022,CHW2025,Dai2016,Dutkay2016,DK18,Fu2017,He2019,JKS2011,JLW2024,Kong2025,LL2021,Lu2026,LSW2023,WDA2019,WZ2018,YZ2025,ZXA2026}.

The systematic study of the complete number problem, particularly through the application of number theory, was initiated by Dutkay et al. \cite{Dutkay2016, DK18}. Following this direction, the complete number problem for the specific model $(\mu_{b,\mathcal{D}}, \Lambda_{b,q\mathcal{D}})$ has been further investigated by several authors (see, e.g., \cite{Ai2021,WDA2019, WZ2018, ZXA2026}). Specifically, this paper focus on the following question:
\begin{ques}\label{quesprime}
	For the spectral pair $(\mu_{b,\mathcal{D}}, \Lambda_{b,q\D})$, are all prime numbers (excluding certain trivial cases) complete numbers?
\end{ques}
However, the answer to this question may also be negative. The question also studied by \cite{Ai2021,CHW2025,DK18}, they obtained the result that some prime numbers can indeed be complete under specific conditions. For instance, Dutkay et al. \cite{DK18} provided a counterexample (see Example \ref{exam2.12}), and we also presented an example (see Example \ref{exam2.13}) illustrating that certain primes are incomplete. 

In this paper, we provide an affirmative answer to this question for the case $b=N^r$. Dutkay et al. \cite{Dutkay2016} investigated the case for $N=2$, $b=4$. Subsequently, Chi et al. \cite{CHW2025} extended this to $N=2$, $b=2^r$ for some $r\ge3$ but they did not prove that all prime numbers are complete. 
This paper is a follow-up to the previously research. The trivial cases are those where $p\mid N$ or $p=\frac{N^r-1}{N-1}$. In both cases, $p$ is incomplete (see Proposition \ref{prop2.3} and Corollary \ref{corolenr-1}).

Throughout the remainder of this paper, unless otherwise specified, a complete number always refers to a complete number for the spectral pair $\left( \mu_{N^r,\mathcal D},\Lambda_{N^r,N^{r-1}\mathcal D}\right)$. The main results are the following:
	
\begin{theo}\label{theomain}
	Let $p\nmid N$ be a prime. The following statements hold.
	\begin{enumerate}
		\item[\textup{(1)}]
		If $\log_p \frac{N^r-1}{N-1}\notin\mathbb N$, then $p^n$ is complete for any $p>N$ and $n\in\mathbb N$.
		\item[\textup{(2)}]
		If $\log_p \frac{N^r-1}{N-1}\in\mathbb N$, then $p^n$ is complete for any positive integer $n<\log_p \frac{N^r-1}{N-1}$ and incomplete for any integer $n\ge\log_p \frac{N^r-1}{N-1}$.
	\end{enumerate}
\end{theo}
\begin{rema}
	The assumption $p>N$ in Theorem \ref{theomain}(1) cannot be removed. For example, let $N=5$ and $r=2$. Then $2$ and $2^2$ are complete. However, $2^3$ is incomplete; see Example \ref{exam527}\textup{(ii)}.
\end{rema}
However, we prove that any prime $p<\frac{N^r-1}{N-1}$ with $\gcd(p,N)=1$ is complete (Corollary \ref{corolenr-1}). We therefore obtain the following corollary:
\begin{coro}
	Let $p$ be a prime. Then $p$ is complete if and only if $p\nmid N$ and $p\neq\frac{N^r-1}{N-1}$.
\end{coro}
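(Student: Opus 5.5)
The plan is to assemble the two directions from the earlier results and Theorem \ref{theomain}.

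\emph{Necessity.} If $p\mid N$, then Proposition \ref{prop2.3} gives that $p$ is incomplete. If $p=\frac{N^r-1}{N-1}$, then Corollary \ref{corolenr-1} (equivalently, Theorem \ref{theomain}(2) with $n=1=\log_p\frac{N^r-1}{N-1}$) gives incompleteness. Hence a complete prime must satisfy $p\nmid N$ and $p\neq \frac{N^r-1}{N-1}$.

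\emph{Sufficiency.} Let $p\nmid N$ with $p\neq M:=\frac{N^r-1}{N-1}$. We split into three cases according to the size of $p$ relative to $N$ and $M$; note that $N<M$ since $r\ge 2$.
\begin{enumerate}
\item[(a)] If $p<M$, the result stated just before the corollary (Corollary \ref{corolenr-1}) says every prime $p<M$ with $\gcd(p,N)=1$ is complete.
\item[(b)] If $p>M$, then in particular $p>N$. Moreover $\log_p M\notin\mathbb N$, because $1<M<p$ gives $0<\log_pM<1$. Theorem \ref{theomain}(1) with $n=1$ therefore yields completeness.
\end{enumerate}
Together with the excluded case $p=M$, this exhausts all possibilities.

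The only point needing care is the dichotomy in Theorem \ref{theomain}. When $\log_pM\in\mathbb N$ we must have $p\le M$, with $p=M$ exactly when the logarithm equals $1$. So for $p<M$ with $\log_p M\ge 2$, Theorem \ref{theomain}(2) with $n=1$ also gives completeness, consistent with (a). When $\log_pM\notin\mathbb N$ and $p<M$, the hypothesis $p>N$ of Theorem \ref{theomain}(1) may fail, so there we must rely on Corollary \ref{corolenr-1} rather than the theorem. The main obstacle thus lies entirely in the cited results, namely Corollary \ref{corolenr-1} for small primes; the corollary itself is then a direct case check.
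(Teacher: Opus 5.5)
Your proposal is correct and follows the same route as the paper. For necessity, the paper also cites Proposition~\ref{prop2.3} (the case $p\mid N$) and Corollary~\ref{corolenr-1} (the case $p=\frac{N^r-1}{N-1}$); for sufficiency, it combines Corollary~\ref{corolenr-1} for $p<\frac{N^r-1}{N-1}$ with Theorem~\ref{theomain}(1) for $p>\frac{N^r-1}{N-1}$, and in that second case you correctly verify $p>N$ and $0<\log_p\frac{N^r-1}{N-1}<1$.
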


	\subsection*{A reformulation in number theory}
	
	Theorem \ref{theomain} reduces the complete number problem to determining when
	a base-$N$ repunit is a prime power. Specifically, the condition
	\[\log_p\frac{N^r-1}{N-1}\in\mathbb N\]
	is equivalent to
	\[\frac{N^r-1}{N-1}=1+N+\cdots+N^{r-1}=p^m,\quad\text{for some }m\in\mathbb N.\]
	If $m=1$, then $\frac{N^r-1}{N-1}$ may be a prime. If $m\ge2$, then $\frac{N^r-1}{N-1}=p^m$ is a solution of the classical Nagell-Ljunggren equation
	\[\frac{x^n-1}{x-1}=y^m \quad\text{in integers }x>1,y>1,m\ge2,n>2,\]
	Thus, the higher prime-power case is closely related to the Nagell-Ljunggren equation.
	
	In the special case $N=2$, Mih\u{a}ilescu's theorem (Catalan's conjecture), which states that the only solution of the Diophantine equation
	\begin{equation}\nonumber
		x^u - y^v = 1, \quad\text{in integers } x,y,u, v \ge 2
	\end{equation}
	is given by $3^2-2^3=1$. Mih\u{a}ilescu's theorem (Catalan's conjecture) gives the following consequence.
	
	\begin{theo}\label{theCHW}
		Let $p\neq 2^r-1$ be an odd prime.
		Then for the spectral pair $
		\left( \mu_{2^r,\{0,2^{r-1}\}},
		\Lambda_{2^r,\{0,1\}}\right) $, $p^n$ is complete for any $n\in\mathbb N^+$.
	\end{theo}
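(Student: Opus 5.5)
We specialize Theorem \ref{theomain} to $N=2$. Then $\mathcal D=\{0,1\}$, $b=2^r$, $q=2^{r-1}$, and $\frac{N^r-1}{N-1}=2^r-1$. The pair in Theorem \ref{theCHW} is the pair of Theorem \ref{theomain} written in the normalization $\mu_{2^r,\{0,2^{r-1}\}}$ with spectrum $\Lambda_{2^r,\{0,1\}}$. These two pairs differ only by the dilation $x\mapsto 2^{r-1}x$ applied to the measure and its inverse applied to the spectrum. Since dilations commute with multiplication by $t$, the set of complete numbers is the same for both pairs. After recording this identification, the hypothesis $p\nmid N$ becomes simply "$p$ odd", which we are given.

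The plan is then to split on whether $\log_p(2^r-1)\in\mathbb N$.

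If $\log_p(2^r-1)\notin\mathbb N$, Theorem \ref{theomain}(1) gives completeness of every $p^n$, provided $p>N=2$. This holds because $p$ is odd.

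If $\log_p(2^r-1)=m\in\mathbb N$, then $2^r-1=p^m$, and we show this forces $m=1$.
\begin{itemize}
\item Suppose $m\ge 2$. Then $p^m-2^r=-1$, that is, $2^r-p^m=1$.
\item For $r\ge2$ this is a solution of Catalan's equation $x^u-y^v=1$ with $x=2$, $u=r$, $y=p$, $v=m$, all at least $2$.
\item By Mih\u{a}ilescu's theorem the only solution is $3^2-2^3=1$, whose base $x=3$ is odd. So no solution with $x=2$ exists.
\item If $r=1$, then $2^r-1=1$ is not a positive power of $p$.
\end{itemize}
Hence $m=1$, so $p=2^r-1$, which the hypothesis excludes. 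Therefore the second case never occurs, and the first case covers all admissible $p$.

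The only real content is the Catalan step, together with checking that the dilated normalization in Theorem \ref{theCHW} matches the pair of Theorem \ref{theomain}. I expect the normalization check to be the main point needing care. Concretely, $\mu_{2^r,\{0,2^{r-1}\}}$ is the image of $\mu_{2^r,\{0,1\}}$ under $x\mapsto 2^{r-1}x$. Also $\Lambda_{2^r,\{0,1\}}=2^{-(r-1)}\Lambda_{2^r,2^{r-1}\{0,1\}}$, so exponentials are preserved under the paired change of variables. Consequently $t\Lambda$ is a spectrum in one model exactly when the corresponding set is a spectrum in the other.
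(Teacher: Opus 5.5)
Your proof is correct and follows the paper's own argument. Mih\u{a}ilescu's theorem rules out $2^r-1=p^m$ with $m\ge 2$, the hypothesis $p\neq 2^r-1$ rules out $m=1$, and Theorem~\ref{theomain}(1) then applies because an odd prime exceeds $N=2$. You also check that $(\mu_{2^r,\{0,2^{r-1}\}},\Lambda_{2^r,\{0,1\}})$ and $(\mu_{2^r,\{0,1\}},\Lambda_{2^r,2^{r-1}\{0,1\}})$ have the same complete numbers; the paper leaves this implicit, and your dilation argument for it is right.
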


This paper is organized as follows. In Section \ref{secpre}, we prove that being coprime to $N$ is a necessary condition for a number to be complete. For integers satisfying this condition, we introduce the integer cycles, which serve as the main tool for determining whether a number is complete. We also present definitions and properties of orders and primitive numbers that are needed in the subsequent proofs. Section \ref{secproof} is devoted to the proof of Theorem \ref{theomain}. In Section \ref{sec5}, we present examples illustrating the applicability of Theorem \ref{theomain}.

\section{Preliminary}\label{secpre}
In this section, we first prove that any complete number must be an integer coprime to $N$. Consequently, it suffices to consider primes that do not divide $N$. Within this setting, integer cycles become an effective tool for studying complete numbers. Furthermore, primitive numbers, as a special subclass of incomplete numbers, play a central role in the proofs presented in this paper.

Let $\mu$ be a Borel probability measure with compact support in
$\mathbb R$. The Fourier transform of $\mu$ is defined by
\[
\widehat{\mu}(\xi)
=
\int e^{-2\pi i \xi x}\,d\mu(x).
\]
For a function $f$, we denote its zero set by
\[
\mathcal Z(f)=\{x:f(x)=0\}.
\]
Let $\Lambda\subset\mathbb R$ be a countable set. We say that $\Lambda$
is an orthogonal set for $\mu$ if
\[
E(\Lambda)=\{e^{2\pi i\lambda x}:\lambda\in\Lambda\}
\]
is an orthogonal set in $L^2(\mu)$. This is equivalent to
\[
\Lambda-\Lambda
\subset
\mathcal Z(\widehat{\mu})\cup\{0\}.
\]

We shall use the following criterion of Jorgensen and Pedersen
\cite[Lemma 3.3]{JP1998}, which follows from Parseval's identity.

\begin{theo}\label{theo2.1}
	Let $\mu$ be a Borel probability measure with compact support in
	$\mathbb R$, and define
	\[
	Q_\Lambda(\xi)
	=
	\sum_{\lambda\in\Lambda}
	|\widehat{\mu}(\xi+\lambda)|^2,
	\qquad \xi\in\mathbb R.
	\]
	Then the following statements hold.
	\begin{enumerate}
		\item[\textup{(i)}]
		$\Lambda$ is an orthogonal set for $\mu$ if and only if $Q_\Lambda(\xi)\leq 1$ for all $ \xi\in\mathbb R.$
		
		\item[\textup{(ii)}]
		$\Lambda$ is a spectrum for $\mu$ if and only if $
		Q_\Lambda(\xi)\equiv 1$
		for all $\xi\in\mathbb R.$
	\end{enumerate}
	In both cases, $Q_\Lambda$ admits an entire analytic extension to
	$\mathbb C$.
\end{theo}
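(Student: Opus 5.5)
The statement is Theorem \ref{theo2.1}. I would prove it directly from Bessel's inequality and Parseval's identity in $L^2(\mu)$, writing $e_\xi(x)=e^{2\pi i\xi x}$. Each $e_\xi$ has norm $1$ in $L^2(\mu)$, and
\[
\langle e_{-\xi},e_{\lambda}\rangle=\int e^{-2\pi i(\xi+\lambda)x}\,d\mu(x)=\widehat{\mu}(\xi+\lambda).
\]
Hence
\[
Q_\Lambda(\xi)=\sum_{\lambda\in\Lambda}|\langle e_{-\xi},e_\lambda\rangle|^2 .
\]
So $Q_\Lambda(\xi)$ is exactly the Bessel sum of the unit vector $e_{-\xi}$ against the family $E(\Lambda)$. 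The whole proof is a reading of this identity.

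\textbf{Part (i).} If $E(\Lambda)$ is orthonormal, Bessel's inequality gives $Q_\Lambda(\xi)\le \|e_{-\xi}\|^2=1$. Conversely, suppose $Q_\Lambda\le 1$ and fix $\lambda_0\in\Lambda$. Evaluating at $\xi=-\lambda_0$, the term $\lambda=\lambda_0$ contributes $|\widehat\mu(0)|^2=1$. Therefore
\[
\sum_{\lambda\ne\lambda_0}|\widehat\mu(\lambda-\lambda_0)|^2\le 0,
\]
so $\widehat\mu(\lambda-\lambda_0)=0$ for all $\lambda\ne\lambda_0$. This is precisely orthogonality.

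\textbf{Part (ii).} If $\Lambda$ is a spectrum, Parseval's identity applied to $e_{-\xi}$ gives $Q_\Lambda(\xi)=1$ for every $\xi$. Conversely, suppose $Q_\Lambda\equiv 1$. By (i), $E(\Lambda)$ is orthonormal, and equality in Bessel's inequality shows that every $e_{\xi}$ lies in $\overline{\operatorname{span}}\,E(\Lambda)$. It remains to see that the exponentials $\{e_\xi:\xi\in\mathbb R\}$ span a dense subspace of $L^2(\mu)$.
\begin{itemize}
\item Let $K$ be the compact support of $\mu$. The linear span of all $e_\xi$ is a unital self-adjoint subalgebra of $C(K)$, since $e_\xi e_\eta=e_{\xi+\eta}$ and $\overline{e_\xi}=e_{-\xi}$.
\item This subalgebra separates points of $K$, so by Stone--Weierstrass it is uniformly dense in $C(K)$.
\item Since $C(K)$ is dense in $L^2(\mu)$, so is the span of the exponentials.
\end{itemize}
Hence $E(\Lambda)$ is complete.

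\textbf{Entire extension.} This is the only step that needs some care, and I expect it to be the main obstacle. Because $\mu$ has compact support, $\widehat\mu(z)=\int e^{-2\pi i zx}\,d\mu(x)$ is entire. For each $\lambda$, the function
\[
z\mapsto \widehat\mu(z+\lambda)\,\overline{\widehat\mu(\bar z+\lambda)}
\]
is entire and agrees with $|\widehat\mu(\xi+\lambda)|^2$ on $\mathbb R$.

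To sum over $\lambda$, I would set $g_z(x)=e^{-2\pi i zx}\in L^2(\mu)$. Then $\widehat\mu(z+\lambda)=\langle g_z,e_\lambda\rangle$, and $\|g_z\|_{L^2(\mu)}\le e^{2\pi|\operatorname{Im} z|R}$, where $K\subset[-R,R]$. In both cases $\Lambda$ is orthogonal, so Bessel's inequality gives
\[
\sum_\lambda|\widehat\mu(z+\lambda)|^2\le \|g_z\|^2,
\]
which is bounded on compact sets. By Cauchy--Schwarz, the tail sums of $\sum_\lambda \widehat\mu(z+\lambda)\overline{\widehat\mu(\bar z+\lambda)}$ are then controlled uniformly on compact sets. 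To make this rigorous, I would apply Bessel's inequality to the finite-dimensional projection complement on compacta and use equicontinuity of $z\mapsto g_z$ in $L^2(\mu)$, which makes the tails uniformly small. The series therefore converges locally uniformly, and by Weierstrass's theorem its sum is an entire function extending $Q_\Lambda$.
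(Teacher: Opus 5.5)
Your proof is correct and is essentially the standard Bessel/Parseval argument of Jorgensen--Pedersen \cite[Lemma 3.3]{JP1998}; the paper gives no proof of its own, only this citation and the remark that the result follows from Parseval's identity. Your Stone--Weierstrass density step for completeness and your locally uniform tail control (via Bessel and continuity of $z\mapsto g_z$) for the entire extension are both sound.
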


For the measure $\mu_{N^r,\mathcal D}$,	we have
\[
\widehat{\mu}_{N^r,\mathcal D}(\xi)
=
\prod_{k=1}^{\infty}
\widehat{\delta}_{\mathcal D}(N^{-kr}\xi).
\]
Noticing that
\[
\mathcal Z(\widehat{\delta}_{\mathcal D})
=
N^{-1}(\mathbb Z\setminus N\mathbb Z),
\]
then we have
\[
\mathcal Z(\widehat{\mu}_{N^r,\mathcal D})
=
\bigcup_{k=1}^{\infty}
N^{kr-1}(\mathbb Z\setminus N\mathbb Z).
\]

The following proposition shows that, in the study of complete numbers, it is enough to consider integers coprime to $N$.

\begin{prop}\label{prop2.3}
	If $t\in\mathbb R$ is complete, then $t\in\mathbb Z $ and $	\gcd(t,N)=1.$
\end{prop}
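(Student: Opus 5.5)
We prove the two conclusions in turn, and in each case we argue by contradiction using only orthogonality of $t\Lambda$, together with completeness for the second part. Throughout, $\Lambda=\Lambda_{N^r,N^{r-1}\mathcal D}$. Note that $0\in\Lambda$ and $N^{r-1}\in\Lambda$, since this is the element $q\cdot 1$ of $q\mathcal D$ with $q=N^{r-1}$.

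\emph{Step 1: $t$ is a nonzero integer.} If $t=0$, then $t\Lambda=\{0\}$, which is not a spectrum. Otherwise, orthogonality of $t\Lambda$ gives
\[
tN^{r-1}=t(N^{r-1}-0)\in\mathcal Z(\widehat\mu_{N^r,\mathcal D})=\bigcup_{k\ge1}N^{kr-1}(\mathbb Z\setminus N\mathbb Z).
\]
So $tN^{r-1}=N^{kr-1}m$ for some $k\ge1$ and some $m$ with $N\nmid m$. Hence $t=N^{(k-1)r}m$, which is an integer.

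\emph{Step 2: $\gcd(t,N)=1$.} Suppose instead that some prime $p$ divides both $t$ and $N$. We show that $t\Lambda$ cannot be complete, using the Jorgensen--Pedersen criterion, Theorem \ref{theo2.1}(ii), evaluated at a suitable point $\xi$. The natural choice is $\xi=1$.

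Every element of $t\Lambda$ is divisible by $p$, since $t\Lambda\subset t\mathbb Z$. Hence $1+t\lambda\equiv1\pmod p$ for every $\lambda\in\Lambda$. We would like to conclude that
\[
\widehat\mu(1+t\lambda)=0 \quad\text{for all }\lambda\in\Lambda,
\]
which would give $Q_{t\Lambda}(1)=0\ne1$ and the desired contradiction. This does not follow directly, because $1+t\lambda$ need not lie in the zero set: the integers in $\mathcal Z(\widehat\mu)$ are exactly those divisible by $N^{r-1}$ to a suitable exact order. So we use a different route.

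The cleaner route is to note that $\mathbb Z\subset\mathcal Z(\widehat\mu)\cup\bigl(\mathbb Z\setminus N^{r-1}\mathbb Z\bigr)\cup\dots$ does not yield a contradiction by itself. Instead we use the identity
\[
Q_{\mathbb Z}(\xi)\,\text{-type}\quad\text{or, more simply, the fact that } t\Lambda\subset d\mathbb Z \text{ with } d=\gcd(t,N)>1.
\]
Any spectrum $\Gamma$ contained in $d\mathbb Z$ makes every function in $\overline{\operatorname{span}}\,E(\Gamma)$ $\tfrac1d$-periodic, in the sense that it is invariant under $x\mapsto x+\tfrac1d$. This is the key observation.

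Now take $f(x)=e^{2\pi i x}$. It is not $\tfrac1d$-periodic on the support of $\mu$, because the support $K$ satisfies $K\cap(K+\tfrac1d)\ne\emptyset$. Indeed, both $0$ and $\tfrac{j}{N}$ lie in $K$ for each digit $j$; to see this, expand $\tfrac{j}{N}$ in base $N^r$ with digits drawn from $\mathcal D$, and choose $j=N/d\in\mathcal D$.

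On the set $K\cap(K-\tfrac1d)$, which has positive $\mu$-measure, every function in $\overline{\operatorname{span}}\,E(t\Lambda)$ satisfies $g(x+\tfrac1d)=g(x)$. To justify the positive-measure claim, use self-similarity: the first-level cylinders $\phi_0(K)$ and $\phi_{N^{r-1}/d}(K)$ are translates of each other by $N^{r-1}/(dN^r)=1/(dN)$. Because of this, it is more convenient to apply the argument to the translation $1/(dN)$, or to iterate the construction at the correct level.

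For the function $e^{2\pi i x}$ itself, invariance would force $e^{2\pi i/d}=1$, which is false. Hence $e^{2\pi i x}$ is not in the closed span of $E(t\Lambda)$, so $t\Lambda$ is not a spectrum.

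The main obstacle is setting up this periodicity argument carefully. One must choose a translation $s=1/(dN^{\ell})$ such that two cylinders of $K$ differ exactly by $s$ and the two cylinder measures are translates of each other. Then, for every $\gamma\in d\mathbb Z$, one checks that $e^{2\pi i\gamma s}$ equals the same root of unity for all $\gamma\in t\Lambda$. Since $t\Lambda\subset tN^{r-1}\mathbb Z$, the natural choice is $s=1/(tN^{r-1})$ up to cylinder compatibility, and the exponential $e^{2\pi i x}$ then separates the two cylinders.
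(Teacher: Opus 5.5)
Your Step 1 is correct and matches the paper's argument. The gap is in Step 2. The translation-invariance idea is viable, but the translation you actually use is wrong, and the claims supporting it are false.

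The support satisfies $K\subset[0,\frac{N-1}{N^r-1}]$, and $\frac{N-1}{N^r-1}<\frac1N\le\frac1d$ for $r\ge2$. So $j/N\notin K$ for every digit $j\ge1$, and $K\cap(K+\frac1d)=\emptyset$. In fact $E(d\mathbb Z)$ is complete in $L^2(\mu)$, because $K$ lies inside one period $[0,\frac1d)$. The mere containment $t\Lambda\subset d\mathbb Z$ therefore gives no obstruction at all.

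Your repair via the cylinders $\phi_0(K)$ and $\phi_{N^{r-1}/d}(K)$ requires $N^{r-1}/d\in\mathcal D$. That holds for $r=2$ but fails for every $r\ge3$, since $N^{r-1}/d\ge N^{r-2}\ge N$. The final suggestion $s=1/(tN^{r-1})$ is in general not an offset between two cylinders, because such offsets are rationals whose denominators are powers of $N$. You name this ``main obstacle'' yourself but never resolve it, so the argument does not close.

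The missing choice is the level-one offset $s=\frac{N/d}{N^r}=\frac{1}{dN^{r-1}}$ between $\phi_0(K)$ and $\phi_{N/d}(K)$; note that $N/d\in\mathcal D$.
\begin{itemize}
\item These two cylinders are disjoint and each has measure $\frac1N$. Moreover, $\mu$ restricted to the second is the translate by $s$ of $\mu$ restricted to the first.
\item For $\gamma\in t\Lambda\subset tN^{r-1}\mathbb Z$ one has $\gamma s\in\frac td\mathbb Z\subset\mathbb Z$.
\item Hence the bounded map $g\mapsto\bigl(g(\cdot+s)-g\bigr)\big|_{\phi_0(K)}$ from $L^2(\mu)$ to $L^2(\mu|_{\phi_0(K)})$ kills $E(t\Lambda)$, and therefore its closed span.
\item It does not kill $\mathbf 1_{\phi_0(K)}$, so $t\Lambda$ is not complete.
\end{itemize}
With this fix you get a valid proof by a route different from the paper's. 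The paper uses $\Lambda=N^{r-1}\mathcal D+N^r\Lambda$ to factor $Q_{t\Lambda}\equiv1$ into the statement that $t\mathcal D$ is a spectrum of $\delta_{N^{-1}\mathcal D}$. It then notes that $t\cdot\frac Nd\in N\mathbb Z\setminus\{0\}$ is not a zero of $\widehat\delta_{N^{-1}\mathcal D}$. Both arguments rest on the same arithmetic fact, $t\cdot\frac Nd\equiv0\pmod N$. Yours is more geometric, while the paper's stays inside the Jorgensen--Pedersen framework.
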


\begin{proof}
	If $t$ is a complete number, then
	$$tN^{r-1}\in t\Lambda_{N^r,N^{r-1}\D}\subset N^{r-1}\mathcal{Z}(\widehat{\mu}_{N^r,\D})\cup\{0\}\subset\Z,$$
	which forces $t\in\Z$. We claim that $t\D$ is a spectrum of $\delta_{N^{-1}\D}$. Indeed, by Theorem \ref{theo2.1}, for $\xi\in\R$,
	\begin{equation*}
		\begin{split}
			1&\equiv Q_{t\Lambda_{N^r,N^{r-1}\D}}(\xi)=\sum_{\lambda\in{\Lambda(N^r,N^{r-1}\D)}}\left\vert\widehat{\mu}_{N^r,\D}(\xi+t\lambda)\right\vert^2=\sum_{d\in \D,\lambda\in{\Lambda_{N^r,N^{r-1}\D}}}\left\vert\widehat{\mu}_{N^{r},\D}(\xi+tN^rd+tN^r\lambda)\right\vert^2\\
			&=\sum_{d\in\D}\sum_{\lambda\in{\Lambda_{N^r,N^{r-1}\D}}}\left\vert \widehat{\delta}_{\D}(N^{-r}(\xi+tN^{r-1}d+tN^r\lambda))\right\vert^2 \prod_{k=2}^{\infty}\left\vert \widehat{\delta}_{\D}(N^{-kr}(\xi+tN^{r-1}d+tN^r\lambda))\right\vert^2\\
			&=\sum_{d\in \D}\left\vert \widehat{\delta}_{\D}(N^{-r}\xi+tN^{-1}d))\right\vert^2\sum_{\lambda\in{\Lambda_{N^r,N^{r-1}\D}}}\prod_{k=1}^{\infty}\left\vert \widehat{\delta}_{\D}(N^{-kr}(N^{-r}\xi+tN^{-1}d+t\lambda))\right\vert^2\\
			&=\sum_{d\in\D}\left\vert\widehat{\delta}_{N^{-1}{\D}}(N^{-(r-1)}\xi+td)\right\vert^2.
		\end{split}
	\end{equation*}
	This implies that $t\D$ is a spectrum of $\delta_{N^{-1}{\D}}$.
	
	Suppose that $\gcd(t,N)=l>1$. We denote $t=lt'$ and $N=lN'$ for some nonzero $t',N'\in\Z$. Note that $N'\in \D$ and
	$$tN'=Nt'\notin\Z\setminus N\Z=\mathcal{Z}(\widehat{\delta}_{N^{-1}{\D}}).$$
	This contradicts $t\D\setminus\{0\}\subset\mathcal{Z}(\widehat{\delta}_{N^{-1}{\D}})$ since $t\D$ is a spectrum of $\delta_{N^{-1}{\D}}$.
\end{proof}

\subsection{Integer cycle}
\ \\
Integer cycles are the main tool for detecting whether a number coprime to $N$ is complete.
	
	\begin{defi}
		Let $R$ be a positive integer and let $\mathcal C\subset\mathbb Z$. A
		periodic sequence $\{x_k\}_{k=0}^{\infty}\subset\mathbb Z$ is called a
		cycle with respect to (w.r.t.) the pair $(R,\mathcal C)$ if there exists a sequence
		$\{c_k\}_{k=0}^{\infty}\subset\mathcal C$ such that
		\[
		x_{k+1}
		=
		\frac{x_k+c_k}{R},
		\qquad k\geq 0.
		\]
		The points $x_k$ are called cycle points. A sequence
		$\{x_k\}_{k=0}^{\infty}$ is called nonzero if it is not $\{0\}_{k=0}^{\infty}$.
	\end{defi}
	
	Integer cycles are closely connected with the complete number problem. This connection was first established by Laba and Wang \cite{LW2002}:
\begin{theo}{\rm\cite[Theorem 1.3]{LW2002}}\label{theoLW}
	Let $t$ be a positive integer such that $\gcd(t,N)=1$. Then $t$ is incomplete
	for the spectral pair $\left( \mu_{N^r,\mathcal D},
	\Lambda_{N^r,N^{r-1}\mathcal D}\right) $
	if and only if there exists a nonzero integer cycle w.r.t. $
	(N^r,tN^{r-1}\mathcal D)$.
\end{theo}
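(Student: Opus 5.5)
By Theorem \ref{theo2.1}, completeness of $t$ means $Q(\xi):=Q_{t\Lambda_{N^r,N^{r-1}\D}}(\xi)\equiv 1$. The plan is to recast both sides of the equivalence in terms of the Ruelle transfer operator attached to the filter $m(x):=\widehat{\delta}_{\D}(x)$. Put
\[
\tau_d(\xi)=N^{-r}(\xi+tN^{r-1}d),\qquad d\in\D ,
\]
and
\[
\mathcal R f(\xi)=\sum_{d\in\D}|m(\tau_d\xi)|^2 f(\tau_d\xi).
\]
Since $\gcd(t,N)=1$, the set $\{td: d\in \D\}$ is a complete residue system modulo $N$, so $\sum_{d}|m(x+td/N)|^2=1$ for every $x$, i.e.\ $\mathcal R1=1$. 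The self-similarity $t\Lambda=tN^{r-1}\D+N^r t\Lambda$, together with $\widehat\mu(\xi)=m(N^{-r}\xi)\widehat\mu(N^{-r}\xi)$, gives $\mathcal RQ=Q$, exactly as in the computation in the proof of Proposition \ref{prop2.3}. Moreover $Q$ is entire, $0\le Q\le1$ and $Q(0)=1$. Finally, $|m(x)|=1$ iff $x\in\Z$, and $m$ vanishes on $N^{-1}(\Z\setminus N\Z)$. Hence the finite ``$m$-cycles'' of the maps $\tau_d$ (periodic orbits along which $|m|=1$) are exactly the integer cycles w.r.t.\ $(N^r,tN^{r-1}\D)$: the recursion $x_{k+1}=(x_k+c_k)/N^r$ is $x_{k+1}=\tau_{d_k}(x_k)$ with $c_k=tN^{r-1}d_k$.

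\emph{($\Leftarrow$) A nonzero cycle forces incompleteness.} Let $\{x_k\}$ be a nonzero integer cycle with digits $c_k=tN^{r-1}d_k$; pick $x_0\neq0$. I would show that $e^{2\pi i x_0 x}$ is orthogonal to every $e^{2\pi i t\lambda x}$, $\lambda\in\Lambda_{N^r,N^{r-1}\D}$. Since $x_0\notin t\Lambda$ (shown below), this contradicts completeness. Write $t\lambda=\sum_{j<n}N^{jr}tN^{r-1}e_j$ and expand $\widehat\mu(x_0-t\lambda)=\prod_k m\bigl(N^{-kr}(x_0-t\lambda)\bigr)$. Follow the orbit of $x_0$ level by level. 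As long as the digits $e_j$ agree with the cycle digits $d_j$ (after reindexing the sign), the arguments remain integers. At the first level where they disagree, the argument lies in $N^{-1}(\Z\setminus N\Z)$, because $t(e_j-d_j)\not\equiv0 \pmod N$; so $m$ vanishes there. If the digits never disagree, then the cycle digits are eventually $0$ (as $\lambda$ has finite expansion). Then $x_{k+1}=x_k/N^r$ along a periodic integer sequence, which forces the cycle to be zero, a contradiction. The same reasoning shows $x_0\notin t\Lambda$.

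\emph{($\Rightarrow$) No nonzero cycle forces completeness.} Put $h=1-Q$. Then $h\ge0$, $h$ is continuous (indeed entire), $\mathcal Rh=h$ and $h(0)=0$. Iterating gives
\[
h(\xi)=\sum_{|w|=n}|m_w(\xi)|^2h(\tau_w\xi),
\]
where the weights sum to $1$ and $\tau_w\xi$ converges uniformly, for $\xi$ in a compact set, to the compact attractor $X$ of $\{\tau_d\}$. I would adapt the Łaba--Wang / Dutkay--Jorgensen argument. For $\varepsilon>0$, the weight carried by words whose endpoints stay away from the $m$-cycles tends to $0$. This is because along long orbits staying in $X$ with $|m|$ bounded away from $1$ on the complement of a neighbourhood of $\Z\cap X$, the products $|m_w|^2$ decay. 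The remaining weight concentrates near points of $\Z\cap X$ lying on cycles, and by hypothesis the only such point is $0$. Hence $h(\xi)=\lim_n\sum_w|m_w|^2h(\tau_w\xi)\le \sup_{B_\varepsilon(0)}h+o(1)$, and continuity of $h$ at $0$ with $h(0)=0$ gives $h\equiv0$, i.e.\ $Q\equiv1$.

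The main obstacle is this last concentration step. One must show that orbit segments passing near integer points of $X$ which are not periodic (transient integer points) contribute nothing in the limit. My first attempt would use finiteness of $\Z\cap X$, and the fact that a transient integer point is mapped, in boundedly many steps, either off $\Z$ (where $m$ has a zero or $|m|<1$) or into a cycle. Combining this with uniform continuity of $m$ on a neighbourhood of $X$ should make the leaked mass vanish geometrically.
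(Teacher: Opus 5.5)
The paper gives no proof of this statement; it is imported from Laba--Wang \cite{LW2002}, so your argument has to stand on its own. Your $(\Leftarrow)$ direction is essentially right, but the sign is wrong as written. From $N^rx_{k+1}=x_k+tN^{r-1}d_k$, it is $x_0+t\lambda$, not $x_0-t\lambda$, whose orbit under $\xi\mapsto N^{-r}\xi$ tracks the cycle. If $e_0=d_0$, then $N^{-r}(x_0+t\lambda)=x_1+t\lambda'$ with $\lambda'\in\Lambda_{N^r,N^{r-1}\mathcal D}$. If $e_0\neq d_0$, then $N^{-r}(x_0+t\lambda)\in x_1+tN^{-1}(e_0-d_0)+\mathbb Z\subset N^{-1}(\mathbb Z\setminus N\mathbb Z)$. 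So your digit argument gives $\widehat\mu(x_0+t\lambda)=0$ for every $\lambda$, i.e.\ $Q(x_0)=0$, and the exponential orthogonal to $E(t\Lambda)$ is $e^{-2\pi i x_0x}$. With $x_0-t\lambda$, matching digits ``up to sign'' produces carries, and the claim is false. Take $N=r=2$, $t=3$, the cycle $\{2\}$ w.r.t.\ $(4,\{0,6\})$ and $\lambda=2$; then $\widehat\mu(2-6)=\widehat\mu(-4)\neq0$. This is easily repaired.

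The genuine gap is in $(\Rightarrow)$, exactly at the step you call the main obstacle. That step is not a technicality but the whole theorem. With $\Lambda_n=\sum_{j=0}^{n-1}N^{jr}N^{r-1}\mathcal D$, one has $1-\sum_{\lambda\in\Lambda_n}|\widehat\mu(\xi+t\lambda)|^2=\sum_{|w|=n}|m_w(\xi)|^2\bigl(1-|\widehat\mu(\tau_w\xi)|^2\bigr)$. Since $|\widehat\mu|=1$ only at $0$, ``the weight concentrates near $0$'' is equivalent to $Q\equiv1$. The reason you give does not prove it. Along a path avoiding a neighbourhood of $\mathbb Z$, each product $|m_w|^2$ decays like $(1-\delta)^n$. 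But there are up to $N^n$ such words, and $1-\delta$ is close to $1$ for small neighbourhoods, so you only get the useless bound $(N(1-\delta))^n$. The total weight is always $1$, and nothing in your sketch prevents it from staying away from the integers; your remarks on transient integer points address only a secondary issue.

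You need an ingredient that forces invariant mass onto cycles. Here is one that uses what you already have. $X=T(N^r,tN^{r-1}\mathcal D)$ is an infinite compact set with $0\in X$ and $\tau_d(X)\subset X$. If $Q\not\equiv1$, then, $Q$ being entire, $\alpha=\min_XQ<1$. The set $K=\{x\in X:Q(x)=\alpha\}$ is finite, since the nonconstant function $Q-\alpha$ has isolated zeros. By $\mathcal RQ=Q$, $K$ contains $\tau_dx$ whenever $x\in K$ and $m(\tau_dx)\neq0$. For fixed $x\in K$, the points $\tau_wx$ with $|w|=n$ are pairwise distinct. Hence the number of words with $m_w(x)\neq0$ is nondecreasing in $n$ and at most $\#K$. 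So from some level on, every such branch has a unique child of positive weight; that child then has weight $1$, i.e.\ it is an integer. Following it inside the finite set $K$ yields an integer cycle w.r.t.\ $(N^r,tN^{r-1}\mathcal D)$ on which $Q=\alpha<1=Q(0)$, hence a nonzero one. Alternatively, invoke the Cerveau--Conze--Raugi theorem on minimal invariant sets of the transfer operator. Without some such argument, the $(\Rightarrow)$ direction is not established.
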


	\begin{theo}\label{integercycle}
		Let $t$ be a positive integer such that $\gcd(t,N)=1$. Then 
		\begin{enumerate}
			\item[\rm (i)] $t$ is incomplete if and only if there exists a nonzero integer cycle w.r.t. $(N^r,t\mathcal D)$;
			\item[\rm (ii)] Suppose that $t$ is
			incomplete and that $\{x_k\}_{k=0}^{\infty}$ is a nonzero integer cycle w.r.t. $(N^r,t\mathcal D)$ of period $n$. Then, for some
			$\{d_i\}_{i=0}^{\infty}\subset\mathcal D$,
			\[x_k=t\frac{d_{k-n}+N^rd_{k-n+1}+\cdots+N^{(n-1)r}d_{k-1}}{N^{nr}-1}\in T(N^r,t\mathcal D)\cap\mathbb Z\setminus\{0\},\quad\forall k\ge n;\]
			\item [\rm (iii)] If $t$ is incomplete, then $kt$ is also incomplete for any $k\in\mathbb N^+$;
			\item [\rm (iv)] If $t$ is incomplete and $\{x_k\}_{k=0}^\infty$, $\{y_k\}_{k=0}^\infty$ are two distinct nonzero integer cycles w.r.t. $(b,t\D)$, then $x_i \neq y_j$ for any $i,j\in\N$.
		\end{enumerate}
	\end{theo}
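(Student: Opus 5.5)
The plan is to prove the four parts separately. Part (i) is a reduction to Theorem \ref{theoLW} by a rescaling of cycles. Part (ii) comes from iterating the cycle relation over one period. Parts (iii) and (iv) are elementary manipulations of the recursion $x_{k+1}=(x_k+c_k)/N^r$.

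For (i), I would show that nonzero integer cycles w.r.t. $(N^r,tN^{r-1}\mathcal D)$ and w.r.t. $(N^r,t\mathcal D)$ correspond bijectively via $x_k=N^{r-1}y_k$.
\begin{itemize}
\item Let $\{x_k\}$ be a cycle for $(N^r,tN^{r-1}\mathcal D)$. The relation $N^rx_{k+1}=x_k+tN^{r-1}c_k$ gives $x_k\equiv 0\pmod{N^{r-1}}$ for every $k$.
\item So $y_k:=x_k/N^{r-1}\in\mathbb Z$, and dividing by $N^{r-1}$ gives $y_{k+1}=(y_k+tc_k)/N^r$.
\item Conversely, multiplying a cycle for $(N^r,t\mathcal D)$ by $N^{r-1}$ gives a cycle for $(N^r,tN^{r-1}\mathcal D)$.
\end{itemize}
The map preserves periodicity and nonzeroness, so (i) follows from Theorem \ref{theoLW}.

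For (ii), I would use the recursion backwards. Unrolling $x_k=N^{-r}(x_{k-1}+tc_{k-1})$ $n$ times gives
\[
x_k=N^{-nr}x_{k-n}+t\sum_{j=1}^{n}N^{-jr}c_{k-j}.
\]
Periodicity gives $x_{k-n}=x_k$. Solving for $x_k$ and setting $d_i:=c_i$ yields the stated formula. Expanding $1/(N^{nr}-1)=\sum_{m\ge1}N^{-mnr}$ writes $x_k$ as $\sum_{j\ge1}N^{-jr}t\,d'_j$ with $d'_j\in\mathcal D$, a periodic digit sequence. This shows $x_k\in T(N^r,t\mathcal D)$, which I read as the attractor $\{\sum_{j\ge1}N^{-jr}t d_j: d_j\in\mathcal D\}$.

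For nonvanishing, note that $t>0$ and $d_i\ge0$, so $x_k\ge0$ for $k\ge n$. Moreover $x_k=0$ forces all digits in the window to vanish. By periodicity of the window this gives $c_i=0$ for all $i$, and then every point of the cycle is zero. This contradicts the assumption that the cycle is nonzero.

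For (iii), if $\{y_k\}$ is a nonzero cycle for $(N^r,t\mathcal D)$, then $\{ky_k\}$ satisfies $ky_{k+1}=(ky_k+(kt)c_k)/N^r$. So it is a nonzero cycle for $(N^r,kt\mathcal D)$, and (i) applies. Note that $\gcd(kt,N)$ need not be $1$. In that case I would still conclude incompleteness, either directly from Proposition \ref{prop2.3} or by using the cycle criterion only in the direction where a cycle implies incompleteness. This is the one point to check carefully.

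For (iv), I interpret $b=N^r$ and ``distinct'' as not being shifts of one another. The main step is uniqueness of successors: for an integer $x$ there is at most one $c\in\mathcal D$ with $N^r\mid x+tc$.
\begin{itemize}
\item If $c,c'\in\mathcal D$ both work, then $N^r\mid t(c-c')$.
\item Since $\gcd(t,N)=1$, this gives $N^r\mid c-c'$.
\item But $|c-c'|<N\le N^r$, so $c=c'$.
\end{itemize}
Hence the forward orbit of any integer is determined. If $x_i=y_j$, the two periodic sequences agree from those indices on, so they are shifts of the same cycle. This contradicts their being distinct.

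I expect the main obstacle to be bookkeeping rather than substance. One issue is the index shift in (ii). The other is making sure the coprimality hypothesis is not needed in the direction of (i) used for (iii).
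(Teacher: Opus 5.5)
Your proposal is correct. For (i)--(iii) it follows the paper's proof: rescale cycles by $N^{r-1}$ to pass between $(N^r,tN^{r-1}\mathcal D)$ and $(N^r,t\mathcal D)$, unroll the recursion over one period, and split (iii) according to whether $\gcd(kt,N)=1$, using Proposition \ref{prop2.3} otherwise. The periodicity of the digit sequence you invoke in (ii) is immediate from $c_k=(N^rx_{k+1}-x_k)/t$.

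Part (iv) is where you take a different route. The paper shifts both cycles so the common point sits at index $0$. It then writes $x_0$ and $y_0$ through the closed form of (ii) over the common period $nl$. After cancelling $t$, uniqueness of base-$N^r$ expansions with digits in $\{0,\dots,N-1\}$ forces identical digit strings, hence identical cycles.

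You instead prove a local determinism lemma: since $\gcd(t,N)=1$ and $|c-c'|<N^r$, each integer has at most one admissible successor. So two cycles meeting at a point coincide from then on. Your argument is more elementary and does not need (ii). It also gives at once that the points within a minimal period are pairwise distinct, since $x_0=x_m$ with $0<m<n$ would make $m$ a period. The paper obtains that fact in Proposition \ref{prop-primitive-properties}(iii) by a more roundabout appeal to (iv).

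The paper's route, on the other hand, never uses coprimality of $t$ and $N$, because $t$ cancels. Both arguments read ``distinct'' as ``not shifts of one another''. You state this explicitly; the paper uses it implicitly in its ``without loss of generality $x_0=y_0$''.
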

	
\begin{proof}
	(i) For the necessity, according to Theorem \ref{theoLW}, there exists a nonzero integer cycle $\{x_k\}_{k=0}^{\infty}$ w.r.t. $(N^{r}, tN^{r-1}\D)$. That is, there exists a sequence $\{d_k\}_{k=0}^\infty$ in $\D$ such that
	\[
	x_{k+1}=\frac{x_k+tN^{r-1}d_k}{N^r}\in\Z\setminus\{0\},
	\quad k\in\N.
	\]
	Obviously, $N^{r-1}$ divides $x_k$ for all $k\ge1$. Let $y_k=N^{-(r-1)}x_k$. Then $\{y_k\}_{k=0}^{\infty}$ is a nonzero integer cycle w.r.t. $(N, t\D)$.
	
	For the sufficiency, if $\{x_k\}_{k=0}^{\infty}$ is a nonzero integer cycle w.r.t. $(N, t\D)$, then $\{N^{r-1}x_k\}_{k=0}^{\infty}$ is a nonzero integer cycle w.r.t. $(N^{r}, tN^{r-1}\D)$. By Theorem \ref{theoLW}, $t$ is incomplete.
	
	(ii) By the definition of a cycle,
	\begin{equation}\label{eqxk}
		x_k=\frac{x_{k-1}+td_{k-1}}{N^r}=\frac{td_{k-1}}{N^r}+\frac{td_{k-2}}{N^{2r}}+\cdots+\frac{td_{k-n}}{N^{nr}}+\frac{x_{k-n}}{N^{nr}}.
	\end{equation}
	Since $x_k=x_{k-n}$, we obtain
	\[x_k=t\frac{d_{k-n}+N^rd_{k-n+1}+\cdots+N^{(n-1)r}d_{k-1}}{N^{nr}-1}.\]
	Moreover, iterating \eqref{eqxk} shows that
	$x_k\in T(N^r,t\mathcal D)$. One easily checks that if $x_k=0$, then $d_i=0$ for all $i\in\N$. Hence $\{x_k\}_{k=0}^{\infty}=\{0\}_{k=0}^\infty$, a contradiction.
	
	(iii) If $\gcd(kt,N)>1$, then $kt$ is incomplete by Proposition~\ref{prop2.3}. If $\gcd(kt,N)=1$, then any nonzero integer cycle w.r.t. $(N^r,t\mathcal D)$ yields a nonzero integer cycle w.r.t. $(N^r,kt\mathcal D)$ by multiplying each cycle point by $k$. Hence, Theorem~\ref{integercycle}\,\textup{(i)} implies the result.
	
	(iv) Let $n$ and $l$ be period of $\{x_k\}_{k=0}^\infty$ and $\{y_k\}_{k=0}^\infty$, respectively. Without loss of generality, we suppose $x_0=y_0$. Similar as the proof of (ii), there exist $\{c_k\}_{k=0}^\infty$ and $\{d_k\}_{k=0}^\infty$ in $\D$ such that
	\begin{eqnarray*}
		x_0=t\frac{c_0+N^rc_1+\cdots+N^{(n-1)r}c_{n-1}+\cdots+N^{(nl-1)r}c_{nl-1}}{N^{nlr}-1}
	\end{eqnarray*}
	and
	\begin{eqnarray*}
		y_0=t\frac{d_0+N^rd_1+\cdots+N^{(n-1)r}d_{n-1}+\cdots+N^{(nl-1)r}d_{nl-1}}{N^{nlr}-1}.
	\end{eqnarray*}
	$x_0=y_0$ forces $c_i=d_i$ for $0\le i\le nl-1$. Then $x_i=y_i$ for all $i\in\N$, which leads a contradiction.
\end{proof}


\begin{coro}\label{corolenr-1}
	Any positive integer $t$ satisfying $\gcd(t,N)=1$ and $1\le t<\frac{N^r-1}{N-1}$ is complete, whereas the number $\frac{N^r-1}{N-1}$ is incomplete.
\end{coro}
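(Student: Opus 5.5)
We use Theorem \ref{integercycle}(i): $t$ (with $\gcd(t,N)=1$) is incomplete exactly when there is a nonzero integer cycle w.r.t. $(N^r,t\mathcal D)$. Throughout, write $M=\frac{N^r-1}{N-1}=1+N+\cdots+N^{r-1}$. The idea is that every integer cycle point lies in the attractor $T(N^r,t\mathcal D)$, whose convex hull is
\[
\Bigl[0,\ \tfrac{t(N-1)}{N^r-1}\Bigr]=\Bigl[0,\ \tfrac{t}{M}\Bigr].
\]

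\emph{Completeness for $t<M$.} By Theorem \ref{integercycle}(ii), every cycle point $x_k$ with $k\ge n$ is a nonzero integer in $T(N^r,t\mathcal D)\subset[0,t/M]$. If $t<M$, then $t/M<1$, so this interval contains no nonzero integer. Hence no nonzero integer cycle exists. The cycle points $x_k$ with $k<n$ coincide with later ones by periodicity, so they cause no trouble. Therefore $t$ is complete.

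\emph{Incompleteness of $M$.} For $t=M$, we exhibit an explicit cycle: the constant sequence $x_k=1$ with digits $d_k=N-1$ for all $k$. Indeed,
\[
\frac{x_k+t(N-1)}{N^r}=\frac{1+M(N-1)}{N^r}=\frac{1+N^r-1}{N^r}=1 .
\]
Also $\gcd(M,N)=1$, since $M\equiv 1\pmod N$. So Theorem \ref{integercycle}(i) applies and $M$ is incomplete.

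The only point needing care is checking that the attractor bound applies to every cycle point and not just those with $k\ge n$. This is handled by periodicity, so I expect no real obstacle.
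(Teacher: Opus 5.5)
Your proof is correct and follows essentially the same route as the paper. The attractor bound $T(N^r,t\mathcal D)\subset[0,t(N-1)/(N^r-1)]\subset[0,1)$ rules out nonzero integer cycle points via Theorem \ref{integercycle}(ii), and the constant cycle $\{1\}$ with digit $N-1$ shows that $\frac{N^r-1}{N-1}$ is incomplete. Your explicit check that $\gcd(M,N)=1$, which is needed to invoke Theorem \ref{integercycle}(i), is a small detail the paper leaves implicit.
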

	\begin{proof}
		Suppose that $1\le t<\frac{N^r-1}{N-1}$ is incomplete. By Theorem \ref{integercycle} (i), there exists a nonzero integer cycle w.r.t. $(N^r,t\mathcal D)$. Then we get
		\[T(N^r,t\mathcal D)\subset\left[0,\frac{t(N-1)}{N^r-1}\right]\subset [0,1).\]
		Thus
		\[
		T(N^r,t\mathcal D)\cap\mathbb Z\setminus\{0\}
		=
		\emptyset,
		\]
		which contradicts Theorem \ref{integercycle} (ii).
		
		It is enough to observe that
		$$1	=\frac{1+\frac{N^r-1}{N-1}(N-1)}{N^r}.$$
		Thus $\{1\}_{k=0}^{\infty}$ is a nonzero cycle w.r.t. $\left( N^r,\frac{N^r-1}{N-1}\mathcal D\right) $. $\frac{N^r-1}{N-1}$ is incomplete follows from Theorem \ref{integercycle} (i).
	\end{proof}
	
	
\subsection{Primitive Numbers}\ \\
Primitive numbers are a class of incomplete integer. Such numbers will play an important role in the proof of our main theorem. 
	
	\begin{defi}
		Let $t$ be a positive integer such that $\gcd(t,N)=1$. We say that $t$ is
		\emph{primitive} if $t$ is incomplete and any proper divisor of $t$ is complete.
	\end{defi}
	
	To describe the properties of primitive numbers, we first recall the definition of the order of $a$ modulo $t$.
	\begin{defi}\label{defi4.1}
		Let $a$ and $t$ be positive integers such that $\gcd(a,t)=1$.
		\begin{enumerate}
			\item[(i)] The order of $a$ modulo $t$ is defined by
			$$O_a(t)=\min\{k\in\mathbb Z:a^{k}\equiv1\pmod t\},$$
			which equals the order of the cyclic group $G_{a,t}:=\{a^k\pmod t:1\le k\le O_a(t)\}$.
			\item[(ii)] We define
			\[\ell_a(t)=\max\left\{k\in\mathbb N^+: O_a(t^k)=O_a(t)\right\}.\]
		\end{enumerate}
	\end{defi}
	\begin{prop}\label{prop-primitive-properties}
		Let $t$ be a positive integer such that $\gcd(t,N)=1$. Then the following hold.
		\begin{enumerate}
			\item[\textup{(i)}]
			The number $t$ is incomplete if and only if $t$ is divisible by a primitive number.
			
			\item[\textup{(ii)}]
			If $t$ is primitive, then for any nonzero cycle
			$\{x_k\}_{k=0}^{\infty}$ w.r.t. $(N^r,t\mathcal D)$, one has
			\[
			\gcd(t,x_k)=1
			\qquad
			\text{for all } k\geq 0.
			\]
			
			\item[\textup{(iii)}]
			If $t$ is primitive, then the minimal period of any nonzero integer cycle w.r.t. $(N^r,t\mathcal D)$ equals $O_{N^r}(t)$.
			
			\item[\textup{(iv)}]
			If $t$ is primitive, then there exist
			$d_0,d_1,\ldots,d_{O_{N^r}(t)-1}\in\mathcal D$, not all equal to $0$, such that
			\[t\frac{d_0+N^rd_1+\cdots+N^{r(m-1)}d_{O_{N^r}(t)-1}}{N^{rO_{N^r}(t)}-1}\in\mathbb Z\setminus\{0\}.\]
		\end{enumerate}
	\end{prop}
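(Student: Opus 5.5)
The plan is to prove the four parts in order. Part (i) is a minimality argument. Part (ii) is a divisibility argument that feeds back into (i). Parts (iii) and (iv) combine (ii) with the explicit formula of Theorem~\ref{integercycle}(ii) and the fact that cycle points lie in a short interval.

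For (i), suppose first that $t$ is incomplete. Among the positive divisors of $t$ that are incomplete (the set is nonempty since it contains $t$), choose a minimal one $s$. Every divisor of $t$ is coprime to $N$. Every proper divisor of $s$ is a divisor of $t$ smaller than $s$, hence complete by minimality. So $s$ is primitive. (The number $1$ is complete: it is coprime to $N$ and $1<\frac{N^r-1}{N-1}$, so Corollary~\ref{corolenr-1} applies.) Conversely, if $s\mid t$ with $s$ primitive, then $s$ is incomplete and $t=ks$, so Theorem~\ref{integercycle}(iii) shows $t$ is incomplete.

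For (ii), let $t$ be primitive and $\{x_k\}$ a nonzero cycle w.r.t.\ $(N^r,t\mathcal D)$. Suppose $g=\gcd(t,x_k)>1$ for some $k$.
\begin{itemize}
\item From $N^rx_{k+1}=x_k+td_k$ we get $g\mid N^rx_{k+1}$. Since $\gcd(g,N)=1$, this gives $g\mid x_{k+1}$.
\item Iterating, and using that the sequence is periodic, $g$ divides every $x_j$.
\item Then $\{x_j/g\}$ is a nonzero integer cycle w.r.t.\ $(N^r,(t/g)\mathcal D)$, where $t/g$ is coprime to $N$.
\item By Theorem~\ref{integercycle}(i), the proper divisor $t/g$ is incomplete, which contradicts primitivity.
\end{itemize}

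For (iii), let $n$ be the minimal period and $m=O_{N^r}(t)$. We show $m\mid n$ and $n\le m$.
\begin{itemize}
\item By Theorem~\ref{integercycle}(ii), $x_k(N^{nr}-1)=t\cdot(\text{integer})$. Since $\gcd(x_k,t)=1$ by (ii), this forces $t\mid N^{nr}-1$, so $m\mid n$.
\item Iterating $N^rx_{k+1}\equiv x_k\pmod t$ gives $x_k\equiv N^{rm}x_{k+m}\equiv x_{k+m}\pmod t$.
\item Every point of a periodic sequence is a cycle point, so it lies in $T(N^r,t\mathcal D)\subset\bigl[0,\frac{t(N-1)}{N^r-1}\bigr]$. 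This interval has length less than $t$ (as $r\ge 2$).
\item Hence $x_{k+m}=x_k$ for all $k$, so $n\le m$, and therefore $n=m$.
\end{itemize}
This step, showing the period is no longer than the order, is the only place that needs a real idea: the congruence modulo $t$ combined with the interval bound.

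For (iv), take $n=O_{N^r}(t)$, which is a period by (iii), and apply the formula of Theorem~\ref{integercycle}(ii) to the point $x_n$. This expresses the nonzero integer $x_n$ in exactly the displayed form, with digits $d_0,\dots,d_{O_{N^r}(t)-1}\in\mathcal D$. These digits are not all zero because $x_n\neq0$.
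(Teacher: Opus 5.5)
Your proof is correct and follows essentially the paper's route in all four parts: a minimal incomplete divisor for (i), dividing the cycle by $\gcd(t,x_k)$ for (ii), the congruence $N^r x_{k+1}\equiv x_k \pmod t$ together with the interval bound and coprimality for (iii), and the formula of Theorem~\ref{integercycle}(ii) for (iv). Your version of (iii) is a little cleaner than the paper's. Proving $x_{k+m}=x_k$ for every $k$ shows directly that $O_{N^r}(t)$ is a period, so you avoid the paper's appeal to Theorem~\ref{integercycle}(iv) on distinctness of cycle points within a period.
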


	\begin{proof}
		\textup{(i)}
		Suppose first that $t$ is incomplete. If $t$ is primitive,
		there is nothing to prove. Otherwise, $t$ has a proper divisor $t_1$ which is incomplete. If $t_1$ is primitive, we are done.
		If not, we repeat the same argument. Since divisors decrease at each step, this process must terminate. Hence $t$ is divisible by a primitive number.
		
		Conversely, suppose that $t$ is divisible by a primitive number $s$. Since $s$ is incomplete, Theorem \ref{integercycle} (iii) implies that any
		positive multiple of $s$, in particular $t$, is incomplete.
		
		\medskip
		
		\textup{(ii)}
		Let $\{x_k\}_{k=0}^{\infty}$ be a nonzero cycle w.r.t.
		$(N^r,t\mathcal D)$. Suppose, to the contrary, that
		\[
		\gcd(t,x_0)=m>1.
		\]
		Write $t=mt'$ and $x_0=mx'_0$. If the cycle has period $n$, then
		\[
		x_0=x_n=\frac{x_{n-1}+td_{n-1}}{N^r}
		\]
		for some $d_{n-1}\in\mathcal D$. Hence
		$x_{n-1}=N^rx_0-td_{n-1}$
		is divisible by $m$. Applying the same argument backwards along the cycle, we conclude that
		$m\mid x_k$ for any $k\in\N$.
		Therefore $
		\left\{\frac{1}{m}x_k\right\}_{k=0}^{\infty}$ is a nonzero integer cycle w.r.t. $(N^r,t'\mathcal D)$. By
		Theorem \ref{integercycle} (i), $t'$ is incomplete. This contradicts the fact that $t$ is primitive, since $t'$ is a positive proper divisor of $t$.
		Thus $\gcd(t,x_0)=1$. The same argument applies to any cycle point $x_k$, and the proof of {(ii)} is complete.
		
		\medskip
		
		\textup{(iii)}
		Let $n$ be the minimal period of the cycle.
		By Theorem \ref{integercycle} (ii), any cycle point satisfies
		\[x_k\in T(N^r,t\mathcal D)\subset\left[0,\frac{t(N-1)}{N^r-1}\right]\subset[0,t).\]
		From the definition of integer cycle, $N^rx_{k+1}=x_k+td_k$, then we get $N^rx_{k+1}\equiv x_k \pmod t$. Repeating this argument,
		Thus
		\[x_{O_{N^r}(t)}\equiv \left( N^r\right) {O_{N^r}(t)}x_{O_{N^r}(t)}\equiv\cdots\equiv x_0 \pmod t,\]
		Since both $x_{O_{N^r}(t)}$ and $x_0$ lie in
		$[0,t)$, we have $x_{O_{N^r}(t)}=x_0$. In order to prove that $n\mid {O_{N^r}(t)}$, we need to show that the points in $\{x_k\}_{k=0}^\infty$ are pairwise distinct within a period. Indeed, without loss of generality, suppose that $x_0=x_m$ for some $0<m<n$. Then, by repeating $x_1,\ldots,x_m$ and $x_m,\ldots,x_n$ periodically, respectively, we obtain two distinct integer cycles w.r.t. $(N^r,t\mathcal D)$. Theorem \ref{integercycle} (iv) states that the two distinct nonzero integer cycles w.r.t. $(N^r,t\mathcal D)$ cannot share the common point $x_m$, which is a contradiction. Hence $n\mid {O_{N^r}(t)}$.
		
		If $n<m$, then $x_n=x_0$ gives $N^{rn}x_0\equiv x_0 \pmod t$. By \textup{(ii)}, $\gcd(t,x_0)=1$. Hence
		\[(N^r)^n\equiv 1\pmod t,\]
		which contradicts the definition of $O_{N^r}(t)$. Therefore $n=O_{N^r}(t)$.
		
		\textup{(iv)}
		The conclusion follows directly from (iii) and Theorem \ref{integercycle} (ii).
	\end{proof}
	
	We shall also use the following result of Zhou et al., which was proved in a
	more general form in \cite[Lemma 5.3]{ZXA2026}.
	
	\begin{lem}\label{Ai}
		Let $s,t>1$ be odd integers. If $
		O_{N^r}(st)
		\geq
		\frac{s(N-1)}{N^r-1}
		+
		\frac{N^{r-1}-1}{N^{r-2}},$
		then $st$ is not a primitive number.
	\end{lem}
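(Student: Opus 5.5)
The plan is to argue by contradiction using the cycle machinery of Proposition \ref{prop-primitive-properties}. Suppose $st$ is primitive and put $m=O_{N^r}(st)$. By Proposition \ref{prop-primitive-properties}\,(iii) there is a nonzero integer cycle $\{x_k\}_{k=0}^{\infty}$ w.r.t. $(N^r,st\mathcal D)$ of minimal period exactly $m$. Its points $x_0,\dots,x_{m-1}$ are pairwise distinct, as shown in the proof of (iii). By (ii), every $x_k$ is coprime to $st$. By Theorem \ref{integercycle}\,(ii), every $x_k$ lies in $\left[0,\frac{st(N-1)}{N^r-1}\right]$. The aim is to show that a cycle constrained in this way has at most $\frac{s(N-1)}{N^r-1}+\frac{N^{r-1}-1}{N^{r-2}}$ points per period, and in fact strictly fewer. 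That would give $m<\frac{s(N-1)}{N^r-1}+\frac{N^{r-1}-1}{N^{r-2}}$, contradicting the hypothesis.

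To produce the counting, I will study the reduced quantities $u_k=\lfloor x_k/t\rfloor$. These lie in $\left[0,\frac{s(N-1)}{N^r-1}\right]$, so they take at most $\frac{s(N-1)}{N^r-1}+1$ values. The recursion $N^r x_{k+1}=x_k+st\,d_k$ gives $N^r u_{k+1}$ equal to $u_k+s d_k$ up to a bounded error. Since $0\le x_k/t<s$ and $\frac{N-1}{N^r-1}<1$, the error is controlled by how far the fractional parts $\{x_k/t\}$ propagate under division by $N^r$. The second step is a pigeonhole argument: if $m$ is large, two distinct cycle points $x_i\neq x_j$ share the same $u$-value, and their $\mathcal D$-digit sequences then agree on a long block. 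Using the expansion $x_k=st\frac{d_{k-n}+\cdots+N^{(n-1)r}d_{k-1}}{N^{nr}-1}$, agreement of the digits on a block of length about $\log_{N^r}$ of the range pins $x_i-x_j$ down to a small integer multiple of $st/(N^{r}-1)$-type quantities.

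The third step combines this with integrality and coprimality. The difference $x_i-x_j$ is an integer, is smaller in absolute value than $\frac{st(N-1)}{N^r-1}$, and satisfies $x_i\equiv N^{r(j-i)}x_j \pmod{st}$. This should force $t\mid (x_i-x_j)$, or an analogous divisibility. That divisibility, together with $\gcd(x_k,st)=1$ and the oddness of $s,t$ (which excludes the degenerate parity case where $x_i-x_j=\pm st/2$), contradicts Theorem \ref{integercycle}\,(iv) or the distinctness of the cycle points.

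The main obstacle is making the pigeonhole sharp enough to produce exactly the additive constant $\frac{N^{r-1}-1}{N^{r-2}}$ rather than a cruder $O(1)$ or $O(r)$ term. I expect this constant to come from bounding the extra values created by carries in the top base-$N$ digits, which is a geometric sum of the form $\sum_{j=0}^{r-2}N^{-j}$ over the $r-1$ carry positions. The first thing I would try is to pigeonhole on the pair $(u_k, d_k)$ restricted to digits compatible with $u_k$. I would then check on small examples, such as $N=2$ or $N=3$ with $r=2,3$, that the count of admissible pairs is at most $\frac{s(N-1)}{N^r-1}+\frac{N^{r-1}-1}{N^{r-2}}$.
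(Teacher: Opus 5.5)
Your third step is where the argument breaks, and it cannot be repaired. From $x_i\equiv N^{r(j-i)}x_j \pmod{st}$ and $\gcd(x_j,t)=1$ you get $t\mid x_i-x_j$ exactly when $O_{N^r}(t)\mid j-i$. Two points sharing the value $u=\lfloor x/t\rfloor$, or sharing a block of backward digits, tells you nothing about $j-i$ modulo $O_{N^r}(t)$. It only shows that $x_i-x_j$ is a small nonzero integer, which is not a contradiction.

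In fact the bound you are aiming for, $m<\frac{s(N-1)}{N^r-1}+\frac{N^{r-1}-1}{N^{r-2}}$ uniformly in $t$, is false, and so is the lemma as printed. The paper gives no proof of it; it only cites Zhou et al. Take $N=r=2$, $s=5$, $t=17$:
\begin{itemize}
\item The sequence $7\to23\to27\to28\to7$ (digits $1,1,1,0$) is a nonzero integer cycle w.r.t. $(4,85\{0,1\})$, so $85$ is incomplete.
\item $5$ and $17$ are complete. Nonzero cycle points would have to lie in $\{1\}$, respectively $\{1,\dots,5\}$. The only transitions $4y=x+td$ inside these sets are $4\to1$ and $3\to5$ for $t=17$, and none for $t=5$, so no cycle closes.
\item Hence $85$ is primitive, yet $O_4(85)=4\ge\frac53+1$.
\end{itemize}
The points $23,27,28$ all have $u=1$, and their differences are not divisible by $17$; this is exactly where your divisibility step fails. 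Likewise, $341=11\cdot31$ is primitive: $11$ and $31$ are complete by the same direct check, and $7\to87\to107\to112\to28\to7$ is a cycle. Yet $O_4(341)=5\ge\frac{11}{3}+1$.

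What is true is the version with the ratio of orders, and it is all that Proposition \ref{plnrp} needs, since its proof computes $O_{N^r}(st)/O_{N^r}(t)=p^k$. If $st$ is primitive and $t>1$, then
\[
\frac{O_{N^r}(st)}{O_{N^r}(t)}<\frac{s(N-1)}{N^r-1}+1 .
\]
This follows from your own bucketing idea if you bucket by residue modulo $t$ instead of by $\lfloor x_k/t\rfloor$:
\begin{itemize}
\item Since $\gcd(x_k,t)=1$, one has $x_i\equiv x_j\pmod t$ if and only if $O_{N^r}(t)\mid i-j$.
\item So within one period, every residue class mod $t$ that occurs contains exactly $O_{N^r}(st)/O_{N^r}(t)$ distinct cycle points.
\item These are positive integers at most $\frac{st(N-1)}{N^r-1}$ in one fixed nonzero class mod $t$, so there are fewer than $\frac{s(N-1)}{N^r-1}+1$ of them.
\end{itemize}
With $s=p^k$ this contradicts $p^k\,\frac{N^r-N}{N^r-1}\ge 2$, so Proposition \ref{plnrp} survives. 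The statement you were asked to prove, with $O_{N^r}(st)$ in place of the ratio, is false and has no proof.
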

	\begin{prop}\label{plnrp}
		Let $p>N$ be a prime. If $p^{\ell_{N^r}(p)}$ is complete, then
		$p^n$ is complete for any $n\in\mathbb N^+$.
	\end{prop}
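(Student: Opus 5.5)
The plan is to argue by contradiction and reduce to a primitive power of $p$, which Lemma \ref{Ai} then rules out. Write $\ell=\ell_{N^r}(p)$. Suppose some $p^n$ is incomplete, and choose $n$ minimal with this property.

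First I will show that $p^n$ is primitive and that $n>\ell$. Since $p>N$, we have $\gcd(p^n,N)=1$. By minimality of $n$, every proper divisor $p^k$ ($k<n$) is complete, so $p^n$ is primitive. Next, if $n\le \ell$, then $p^n\mid p^\ell$, and Theorem \ref{integercycle}(iii) would make $p^\ell$ incomplete, contrary to the hypothesis. Hence $n>\ell\ge1$, and in particular $n\ge2$. Also $p$ is odd, because $p>N\ge2$.

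Second, I will bound the order from below: $O_{N^r}(p^n)\ge p$. Reduction modulo $p$ gives a surjection $(\mathbb Z/p^n\mathbb Z)^\*\to(\mathbb Z/p\mathbb Z)^\*$ whose kernel has order $p^{n-1}$. The image of $N^r$ has order $O_{N^r}(p)$, so $O_{N^r}(p^n)=O_{N^r}(p)\cdot p^j$ for some $j\ge0$. The map $k\mapsto O_{N^r}(p^k)$ is nondecreasing. By the definition of $\ell$ together with this monotonicity, $n>\ell$ forces $O_{N^r}(p^n)\neq O_{N^r}(p)$. Therefore $j\ge1$, and so $O_{N^r}(p^n)\ge p$.

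Third, I will apply Lemma \ref{Ai} with $s=p$ and $t=p^{n-1}$; both are odd and greater than $1$. It suffices to verify
\[
p\ \ge\ \frac{p(N-1)}{N^r-1}+\frac{N^{r-1}-1}{N^{r-2}}.
\]
Here $r\ge2$, since $q=N^{r-1}\ge2$ in the setting of the paper. This gives
\[
\frac{p(N-1)}{N^r-1}=\frac{p}{1+N+\cdots+N^{r-1}}\le\frac{p}{N+1}
\qquad\text{and}\qquad
\frac{N^{r-1}-1}{N^{r-2}}<N .
\]
So it is enough to have $pN/(N+1)\ge N$, that is, $p\ge N+1$, which holds because $p>N$. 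Lemma \ref{Ai} then says $p^n$ is not primitive, which contradicts the first step. Hence every $p^n$ is complete.

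The only delicate point is the order estimate in the second step. It uses the group-theoretic fact that $O_{N^r}(p^n)/O_{N^r}(p)$ is a power of $p$, combined with the definition of $\ell_{N^r}(p)$. The remaining inequality is routine.
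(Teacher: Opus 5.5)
Your proof is correct and is essentially the paper's argument: you take a primitive power $p^{n}$ with $n>\ell_{N^r}(p)$, bound its order from below using the definition of $\ell_{N^r}(p)$, and rule it out with Lemma \ref{Ai}. The only difference is in how you split $p^n$. You take $s=p$, $t=p^{n-1}$, so you need only $O_{N^r}(p^n)\ge p$, which you prove by elementary group theory. The paper takes $s=p^{k}$, $t=p^{\ell_{N^r}(p)}$ and uses the exact growth $p^{k}$ from Proposition \ref{proporder}(iv).
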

	
	\begin{proof}
		Suppose that $p^n$ is incomplete for some
		$n\in\mathbb N^+$. By Proposition \ref{prop-primitive-properties}, there
		exists $j\le n$ such that $p^{j}$ is primitive. Since $p^{\ell_{N^r}(p)}$ is complete, Theorem \ref{integercycle} (iii) implies that $j>\ell_{N^r}(p)$. Write $j=\ell_{N^r}(p)+k$, $k\in\mathbb N^+$, and set $s=p^k, t=p^{\ell_{N^r}(p)}.$
		Then $st=p^{\ell_{N^r}(p)+k}$ is primitive. By
		Proposition \ref{proporder} \textup{(iv)},
		$\frac{O_{N^r}(st)}{O_{N^r}(t)}=p^k.$
		Since $p^k>N$,
		\[\begin{aligned}
			p^k=\frac{p^k(N-1)}{N^r-1}+\frac{p^k(N^r-N)}{N^r-1} >\frac{s(N-1)}{N^r-1}+\frac{N^2(N^{r-1}-1)}{N^r-1}>\frac{s(N-1)}{N^r-1}+\frac{N^{r-1}-1}{N^{r-2}}.
		\end{aligned}\]
		Consequently,
		\[O_{N^r}(st)\ge p^k>\frac{s(N-1)}{N^r-1}+\frac{N^{r-1}-1}{N^{r-2}}.\]
		By Lemma \ref{Ai}, $st$ is not primitive, a
		contradiction. Hence $p^n$ is complete for any $n\in\mathbb N^+$.
	\end{proof}

Finally, we present some additional properties of the order.	
\begin{prop}\label{proporder}
	Let $a,t\ge 2$ be integers with $\gcd(a,t)=1$. Then the following hold.
	
	\begin{enumerate}
		\item[\textup{(i)}]
		For every $n\in\mathbb N^+$,
		$t\mid a^n-1$ if and only if 
		$O_a(t)\mid n.$
		
		\item[\textup{(ii)}]
		For every $r\in\mathbb N^+$,
		$O_{a^r}(t)			=
		\frac{O_a(t)}{\gcd(O_a(t),r)}.$
	\end{enumerate}
	
	Furthermore, let $p$ be an odd prime with $p\nmid a$, then the following hold.
	
	\begin{enumerate}
		\item[\textup{(iii)}]
		$O_a(p)\mid p-1.$
		In particular, $\gcd(p,O_a(p))=1.$
		
		\item[\textup{(iv)}]
		For every $k\in\mathbb N^+$,
		$O_a(p^k)
		=
		O_a(p)\,p^{\max\{k-\ell_a(p),0\}}.
		$
		
		\item[\textup{(v)}]
		If $r=p^j s$, where $j\ge 0$ and $\gcd(s,p)=1$, then $\ell_{a^r}(p)=\ell_a(p)+j.$
		In particular, if $p\nmid r$, then
		$\ell_{a^r}(p)=\ell_a(p).$
	\end{enumerate}
\end{prop}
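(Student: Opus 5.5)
This is Proposition \ref{proporder}, a collection of standard facts from elementary number theory. I will prove the five items in order, each resting on the previous ones together with the group structure of $(\mathbb Z/t\mathbb Z)^\times$.

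For (i), write $n=qO_a(t)+s$ with $0\le s<O_a(t)$. Then $a^n\equiv a^s\pmod t$, so $t\mid a^n-1$ exactly when $s=0$, by minimality of $O_a(t)$.

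For (ii), by (i) we have $(a^r)^k\equiv1\pmod t$ if and only if $O_a(t)\mid rk$. Writing $g=\gcd(O_a(t),r)$, this is equivalent to $\frac{O_a(t)}{g}\mid k$. The least such $k$ is therefore $O_a(t)/g$.

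For (iii), Fermat's little theorem gives $a^{p-1}\equiv1\pmod p$. Item (i) then yields $O_a(p)\mid p-1$, and hence $\gcd(p,O_a(p))=1$.

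For (iv), write $d=O_a(p)$ and $\ell=\ell_a(p)$. First I note that the set $\{k: O_a(p^k)=d\}$ is an initial segment $\{1,\dots,\ell\}$. It is nonempty because it contains $k=1$. Since $p^k\mid p^{k+1}$, item (i) gives $O_a(p^k)\mid O_a(p^{k+1})$, so $O_a(p^k)$ is non-decreasing in divisibility. Moreover $\ell$ is finite, because $O_a(p^k)\ge \log_a p^k\to\infty$.

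Next, set $a^d=1+p^{\ell}u$; here $p\nmid u$ by maximality of $\ell$, since $O_a(p^{\ell+1})$ must then differ from $d$. The key tool is the lifting-the-exponent step for odd $p$: if $x=1+p^{m}u$ with $m\ge1$ and $p\nmid u$, then
\[
x^p=1+p^{m+1}u'\quad\text{with } p\nmid u'.
\]
This follows from the binomial expansion, where the terms beyond the linear one are divisible by $p^{2m+1}$ because $p$ is odd, or divisible by $p^{2m}$ with $2m\ge m+2$ when $m\ge2$. Iterating, $a^{dp^j}=1+p^{\ell+j}u_j$ with $p\nmid u_j$. Hence $p^{k}\mid a^{dp^{j}}-1$ if and only if $k\le \ell+j$.

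Now let $k>\ell$. Since $d\mid O_a(p^k)$, write $O_a(p^k)=dm$. Then $(a^d)^m\equiv1\pmod{p^k}$. The element $a^d=1+p^\ell u$ has order a power of $p$ modulo $p^k$, because $a^{dp^{k-\ell}}\equiv1$. So $m=p^j$ for some $j$. By the previous paragraph the minimal such $j$ is $k-\ell$. For $k\le\ell$ we simply have $O_a(p^k)=d$. This gives the formula in (iv).

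The main obstacle is the LTE estimate: for odd $p$ one must check carefully that no binomial term cancels the $p^{m+1}$ term. This holds because $\binom p2p^{2m}$ is divisible by $p^{2m+1}$ and $2m+1\ge m+2$.

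For (v), let $r=p^js$ with $p\nmid s$. By (ii) and (iii), $O_{a^r}(p)=d/\gcd(d,r)$, and here $\gcd(d,r)=\gcd(d,s)$ since $p\nmid d$. By (ii) and (iv),
\[
O_{a^r}(p^k)=\frac{d\,p^{\max\{k-\ell,0\}}}{\gcd\bigl(d\,p^{\max\{k-\ell,0\}},\,p^js\bigr)}=\frac{d}{\gcd(d,s)}\,p^{\max\{k-\ell-j,0\}}.
\]
This equals $O_{a^r}(p)$ exactly when $k\le \ell+j$. Therefore $\ell_{a^r}(p)=\ell+j$, and in particular $\ell_{a^r}(p)=\ell_a(p)$ when $p\nmid r$.
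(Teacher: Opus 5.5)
Your proof is correct. Items (i)--(iii) match the paper's arguments, and (v) is the same computation via (ii) and (iv). The genuine difference is (iv): the paper does not prove it but cites Dutkay and Kraus (Proposition 2.21 there), whereas you derive it from scratch. Your derivation has three steps:
\begin{enumerate}
\item The set $\{k: O_a(p^k)=O_a(p)\}$ is an initial segment, by the divisibility chain $O_a(p)\mid O_a(p^2)\mid\cdots$. It is finite because $a^{O_a(p^k)}-1\ge p^k$.
\item Maximality of $\ell=\ell_a(p)$ forces $v_p(a^d-1)=\ell$.
\item The odd-prime lifting step $v_p(x^p-1)=v_p(x-1)+1$ (for $v_p(x-1)\ge 1$) then identifies $O_a(p^k)/d$ as $p^{k-\ell}$ for $k>\ell$.
\end{enumerate}
This makes the proposition self-contained, at the price of the binomial estimate.

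You handle that estimate correctly, though your justification is compressed. The terms $\binom{p}{i}p^{im}u^i$ with $2\le i\le p-1$ are divisible by $p^{im+1}$ because $p\mid\binom{p}{i}$. The top term $p^{pm}u^p$ needs $pm\ge 2m+1$. Oddness of $p$ is what guarantees both, since it makes $i=2$ a middle term and gives $pm\ge 3m$.

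It is also worth stating explicitly that $m=O_a(p^k)/d$ is exactly the order of $a^d$ modulo $p^k$. This follows from minimality of $O_a(p^k)$, or from (ii), and it is what lets you conclude $m\mid p^{k-\ell}$.

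Finally, in (v) you compute $O_{a^r}(p^k)=\frac{d}{\gcd(d,s)}\,p^{\max\{k-\ell-j,0\}}$ for every $k$, which gives $\ell_{a^r}(p)$ at once. The paper only evaluates at $k=\ell+j$ and $k=\ell+j+1$, and tacitly relies on the initial-segment property for the base $a^r$. Your version is therefore marginally more complete.
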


\begin{proof}
	\textup{(i)} follows directly from the definition of $O_a(t)$.
	
	\textup{(ii)}
	Let and $d=\gcd(O_a(t),r)$. Then
	\[
	(a^r)^u\equiv 1\pmod t
	\quad\Longleftrightarrow\quad
	O_a(t)\mid ru.
	\]
	The smallest positive $u$ satisfying $O_a(t)\mid ru$ is $\frac{O_a(t)}{d}$. Hence
	\[
	O_{a^r}(t)=\frac{O_a(t)}{d}
	=
	\frac{O_a(t)}{\gcd(O_a(t),r)}.
	\]
	
	\textup{(iii)} follows from Fermat's little theorem, since $	a^{p-1}\equiv 1\pmod p.$
	Thus $O_a(p)\mid p-1$, and hence $\gcd(p,O_a(p))=1$.
	
	\textup{(iv)} is proved in \cite[Proposition~2.21]{DK18}.
	
	\textup{(v)}
	Write $r=p^j s, \gcd(s,p)=1.$
	By \textup{(iii)}, $\gcd(p,O_a(p))=1$. Using \textup{(ii)} and \textup{(iv)}, we get
	\[
	\begin{aligned}
		O_{a^r}\bigl(p^{\ell_a(p)+j}\bigr)
		=
		\frac{
			O_a\bigl(p^{\ell_a(p)+j}\bigr)
		}{
			\gcd\bigl(O_a(p^{\ell_a(p)+j}),r\bigr)
		}                                      
		=
		\frac{
			p^jO_a(p)
		}{
			\gcd(p^jO_a(p),p^js)
		}
		=
		\frac{O_a(p)}{\gcd(O_a(p),s)}
		=
		O_{a^r}(p).
	\end{aligned}
	\]
	Similarly,
	\[
	\begin{aligned}
		O_{a^r}\bigl(p^{\ell_a(p)+j+1}\bigr)
		=
		\frac{
			O_a\bigl(p^{\ell_a(p)+j+1}\bigr)
		}{
			\gcd\bigl(O_a(p^{\ell_a(p)+j+1}),r\bigr)
		}                                      
		=
		\frac{
			p^{j+1}O_a(p)
		}{
			\gcd(p^{j+1}O_a(p),p^js)
		}
		=
		p\,\frac{O_a(p)}{\gcd(O_a(p),s)}
		=
		pO_{a^r}(p).
	\end{aligned}
	\]
	Therefore
	\[
	\ell_{a^r}(p)=\ell_a(p)+j
	\]
	by (iv).
\end{proof}

\section{Proof of the main theorem}\label{secproof}
	
In this section, we prove the main theorem. First, suppose that
$\log_p\frac{N^r-1}{N-1}\in\mathbb N$. The conclusion is then easy to verify.
Indeed, for any prime $p$ with $p\nmid N$, we have $p^n<\frac{N^r-1}{N-1}$ for $n<\log_p\frac{N^r-1}{N-1}$, and $\frac{N^r-1}{N-1}\mid p^n$ for $n\ge\log_p\frac{N^r-1}{N-1}$. Therefore, the conclusion follows from Proposition \ref{prop-primitive-properties} and Corollary \ref{corolenr-1}. Hence we only need to prove the following theorem:
	\begin{theo}
		Let $p>N$ be a prime such that 
		$$\log_p\frac{N^r-1}{N-1}\notin\N.$$
		Then $p^n$ is complete for any $n\in\N$.
	\end{theo}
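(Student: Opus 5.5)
By Proposition \ref{plnrp}, it suffices to show that $p^{\ell}$ is complete, where $\ell=\ell_{N^r}(p)$. Suppose instead that $p^{\ell}$ is incomplete. By Proposition \ref{prop-primitive-properties}(i), some $p^j$ with $1\le j\le \ell$ is primitive. Proposition \ref{proporder}(iv) gives $O_{N^r}(p^j)=O_{N^r}(p)=:m$.

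Proposition \ref{prop-primitive-properties}(iv) then supplies digits $d_0,\dots,d_{m-1}\in\mathcal D$, not all zero, such that
\[
x:=p^j\frac{d_0+N^rd_1+\cdots+N^{(m-1)r}d_{m-1}}{N^{rm}-1}\in\mathbb Z\setminus\{0\}.
\]
By Proposition \ref{prop-primitive-properties}(ii), $\gcd(x,p)=1$. Since $0<x\le p^j(N-1)/(N^r-1)<p^j$, the integer $x$ is a genuinely small cycle point. The aim is to turn this into a contradiction with $\log_p\frac{N^r-1}{N-1}\notin\mathbb N$.

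The key algebraic step is the following. Put $D:=d_0+N^rd_1+\cdots+N^{(m-1)r}d_{m-1}$. From $x(N^{rm}-1)=p^jD$ and $\gcd(x,p)=1$ we get $p^j\mid N^{rm}-1$, which is consistent with the choice of $m$. We also get $x\mid D$ after dividing out. Now write $N^{rm}-1=(N^r-1)\,\Phi$ with $\Phi=1+N^r+\cdots+N^{r(m-1)}$, and compare $D$ with $\frac{N^{rm}-1}{N^r-1}\cdot(\text{digit average})$. Writing $D$ in base $N$, it has exactly $m$ nonzero base-$N$ digit positions available, at multiples of $r$. The cycle points $x_0,\dots,x_{m-1}$ are the $m$ distinct residues $x\cdot N^{rk}\bmod p^j$, i.e.\ a coset of the subgroup generated by $N^r$ in $(\mathbb Z/p^j)^*$, and all of them lie in $(0,p^j(N-1)/(N^r-1)]$.

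I would sum the cycle relation $N^r x_{k+1}=x_k+p^jd_k$ over one period. This gives
\[
(N^r-1)\sum_k x_k=p^j\sum_k d_k .
\]
Since $\gcd(\sum x_k\text{-information},p)$ is controlled, this yields $p^j\mid (N^r-1)\sum x_k$. Because $p\nmid N^r-1$ would force $p^j\mid\sum x_k$, we need $p^j\le \sum x_k\le m\,p^j(N-1)/(N^r-1)$, and so $m\ge (N^r-1)/(N-1)$. The strategy is to squeeze: the coset structure (using $p>N$ and that the points are $<p^j/(1+N+\cdots+N^{r-1})$) should force the coset to be the whole set of multiples in a short interval. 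That would make the only possibility the cycle $\{1\}$-type cycles scaled, i.e.\ $p^j=\frac{N^r-1}{N-1}\cdot x'$. Combined with $\gcd(x,p)=1$, this gives $p^j=\frac{N^r-1}{N-1}$, contradicting the hypothesis.

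The case $p\mid N^r-1$ needs separate handling through the factor $\frac{N^r-1}{N-1}$ and the $p$-adic valuation $\ell$.

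The main obstacle is this squeezing step. Showing that a primitive $p^j$ with $j\le \ell$ must equal $\frac{N^r-1}{N-1}$ is essentially a number-theoretic rigidity statement. My first attempt would be to use Lemma \ref{Ai}-style size bounds on $O_{N^r}(p^j)$ against the interval length. Failing that, I would argue via the $p$-adic valuation of $\sum_k d_k$ together with $\sum_k d_k\le m(N-1)$, aiming to show that $p^j$ divides $\frac{N^r-1}{N-1}$ times a unit. That conclusion is precisely excluded by the hypothesis $\log_p\frac{N^r-1}{N-1}\notin\mathbb N$, or it contradicts $p^j>\frac{N^r-1}{N-1}$, which Corollary \ref{corolenr-1} forces for an incomplete $p^j$.
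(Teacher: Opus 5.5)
Your setup is correct and matches the paper's. You reduce to $p^{\ell}$ with $\ell=\ell_{N^r}(p)$ via Proposition \ref{plnrp}, pass to a primitive $p^j$ with $j\le\ell$, and observe that the cycle points form a coset of $\langle N^r\rangle$ in $(\mathbb Z/p^j\mathbb Z)^*$ inside $(0,p^j\frac{N-1}{N^r-1}]$. But the proposal stops exactly where the proof has to begin, so there is a genuine gap. The summed identity $(N^r-1)\sum_k x_k=p^j\sum_k d_k$ is right. Yet when $p\nmid N^r-1$ it only says $N^r-1\mid\sum_k d_k$, i.e.\ $m=O_{N^r}(p)\ge\frac{N^r-1}{N-1}$. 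That is no contradiction: for $N=2$, $r=3$, $p=11$ one has $O_8(11)=10=p-1\ge 7$. The ``squeezing'' step is asserted, not proved, and your fallbacks cannot supply it. Lemma \ref{Ai} only helps above level $\ell$, where $O_{N^r}(p^j)$ picks up the factor $p^{j-\ell}$; that is precisely its role in Proposition \ref{plnrp}. For $j\le\ell$ the order is frozen at $O_{N^r}(p)$, which need not exceed the lemma's threshold, and for $j=1$ the lemma does not apply at all. The $p$-adic valuation of $\sum_k d_k$ is likewise left unconstrained by the identity. So the entire content of the theorem, that no primitive $p^j$ with $j\le\ell$ exists, is left open. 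The same holds for the case $p\mid N^r-1$, which you flag but do not treat.

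The missing idea is to use \emph{specific} elements of $\langle N^r\rangle \bmod p^{\ell}$ acting on a maximal point $x_0$ of a nonzero cycle (Lemma \ref{lem4.1}), which forces a case split.
\begin{itemize}
\item If $O_{N^r}(p)$ is even, then $N^{rO_{N^r}(p)/2}\equiv-1\pmod{p^\ell}$, since $p$ is odd and $p\nmid N^{rO_{N^r}(p)/2}-1$. So $p^\ell-x_0$ would be a cycle point, far outside the interval.
\item If $O_{N^r}(p)$ is odd and $r$ is even, then $N^{r(O_{N^r}(p)+1)/2}\equiv\pm N^{r/2}\pmod{p^\ell}$. The minus sign again pushes a cycle point out of the interval. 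The plus sign produces the cycle point $N^{r/2}x_0>x_0$, because $N^{r/2}<\frac{N^r-1}{N-1}$.
\item If both are odd and $r\nmid O_N(p)$, there are two possibilities. Either $p\mid r$, and a direct estimate gives $p^\ell<\frac{N^r-1}{N-1}$. Or $O_N(p)=\beta r+\alpha$ with $1\le\alpha\le r-1$, which gives $N^{r-\alpha}\equiv N^{r(\beta+1)}\pmod{p^\ell}$, so $N^{r-\alpha}x_0>x_0$ is a cycle point.
\item If $r=O_N(p)$, then $p^\ell\mid\frac{N^r-1}{N-1}$ (using $p\nmid N-1$). Hence $p^\ell<\frac{N^r-1}{N-1}$ and Corollary \ref{corolenr-1} applies.
\item If $r$ is a proper divisor of $O_N(p)$, put $\beta=O_N(p)/r$. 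Then $p\nmid N^\beta-1$, so $N^\beta-1$ divides $\sum_i d_iN^{ir}$. Reducing exponents modulo $\beta$ and comparing sizes forces $\gcd(\beta,r)=1$ and all $d_i=N-1$. Hence $\frac{N^r-1}{N-1}\mid p^j$, contradicting the hypothesis.
\end{itemize}
Some input specific to $b=N^r$ is unavoidable. In Example \ref{exam2.13} ($b=6$, $N=2$), the seven cycle points form a full coset of $\langle 6\rangle$ modulo $55987$ inside $(0,55987/5]$. So a squeeze based only on ``a coset of $\langle b\rangle$ lies in a short interval'' cannot succeed.
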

	According to Proposition \ref{plnrp}, the problem of determining whether $p^n$ is complete is reduced to determining whether the critical power $p^{\ell_{N^r}(p)}$ is complete. We prove this by considering two cases:
	\begin{enumerate}
		\item [(i)] one of $O_{N^r}(p)$ and $r$ is even;
		\item [(ii)] both $O_{N^r}(p)$ and $r$ are odd.
	\end{enumerate}
	For case (ii), by Proposition \ref{proporder} \textup{(ii)},
	\[O_N(p)=O_{N^r}(p)\gcd(O_N(p),r)\]
	is odd. Hence we further divide this case into the following three subcases:
	\[r\nmid O_N(p),\quad r=O_N(p),\quad r\mid O_N(p)\ \text{and}\ r<O_N(p).\]
	
	\subsection{Proof of Case (i)}
	\begin{lem}\label{lem4.1}
		Let $t$ be a positive integer such that $\gcd(t,N)=1$. If $t$ is incomplete and $\{x_k\}_{k=0}^{\infty}$ is an integer cycle w.r.t. $(N^r,t\D)$. Then for any $i\in\N$, $N^{ir}x_0\pmod t$ is an integer cycle point.
	\end{lem}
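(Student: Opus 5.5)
The plan is to read the cycle relation backwards. Every step of the cycle is then multiplication by $N^r$ modulo $t$, and since all cycle points lie in a fundamental interval $[0,t)$, the residue of $N^{ir}x_0 \pmod t$ must itself be a cycle point. I interpret the statement as follows: the least nonnegative residue of $N^{ir}x_0$ modulo $t$ equals some $x_k$ of the same cycle, and in particular is an integer cycle point.

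First I would locate the cycle points. Let $n$ be a period of $\{x_k\}_{k=0}^\infty$. By Theorem \ref{integercycle} (ii), every $x_k$ with $k\ge n$ lies in $T(N^r,t\mathcal D)\subset\left[0,\frac{t(N-1)}{N^r-1}\right]$. Periodicity then gives the same bound for all $k\ge0$. For $r\ge2$ we have $\frac{N-1}{N^r-1}<1$, so every cycle point lies in $[0,t)$, exactly as in the proof of Proposition \ref{prop-primitive-properties} (iii). If the cycle is the zero cycle the statement is trivial, since $N^{ir}\cdot 0\equiv 0$. The case $r=1$ is degenerate: there the bound is $t$ itself, so one must check separately that the value $t$ only occurs in a way compatible with the residue $0$. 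I expect this edge case to be the only delicate point.

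Next comes the backward congruence. From $x_{k+1}=\frac{x_k+td_k}{N^r}$ we get $x_k=N^rx_{k+1}-td_k\equiv N^rx_{k+1}\pmod t$. Iterating $i$ times gives
\[
x_k\equiv N^{ir}x_{k+i}\pmod t .
\]
Given $i\in\N$, I choose $m\in\N$ with $mn\ge i$ and set $k=mn-i$. Then $x_{k+i}=x_{mn}=x_0$ by periodicity, hence $x_{mn-i}\equiv N^{ir}x_0\pmod t$.

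Finally I compare ranges. Both $x_{mn-i}$ and the least nonnegative residue of $N^{ir}x_0$ modulo $t$ lie in $[0,t)$ and are congruent modulo $t$, so they are equal. Thus $N^{ir}x_0\pmod t$ is the cycle point $x_{mn-i}$, which proves the claim. The main (mild) obstacle is the justification that all cycle points lie in $[0,t)$; this is supplied by Theorem \ref{integercycle} (ii) together with periodicity, so that the bound holds for every index and not only for $k\ge n$.
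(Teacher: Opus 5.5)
Your proposal is correct and follows essentially the same route as the paper: you iterate the backward congruence $x_{k-1}\equiv N^r x_k \pmod t$ around the period to get $N^{ir}x_0\equiv x_{(n-i)\bmod n}\pmod t$. You also make explicit what the paper leaves implicit, namely that all cycle points lie in $[0,t)$ by Theorem~\ref{integercycle}~(ii) and periodicity, which is what turns the congruence into an equality; your worry about $r=1$ is moot, since the paper's standing assumption $q=N^{r-1}\ge 2$ forces $r\ge 2$.
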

	\begin{proof}
		We suppose $n$ is the minimal period of $\{x_k\}_{k=0}^\infty$. By the definition of cycle,
		$$N^{r}x_k\equiv x_{k-1}\pmod{t},\quad k\in \N^+.$$
		Then for any $i\in \N^+$, 
		$$N^{ir}x_0\equiv N^{(i-1)r}x_{n-1}\equiv\cdots\equiv x_{(n-i)\ (\text{mod }n)}\pmod{t}.$$
		This completes the proof.
	\end{proof}
	\begin{prop}\label{theocase1}
	Let $p>N$ be a prime such that
	$$\log_p{\frac{N^r-1}{N-1}}\notin\N.$$
	If one of $O_{N^r}(p)$ and $r$ is even, then $p^{\ell_{N^r}(p)}$ is complete.
\end{prop}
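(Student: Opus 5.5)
We argue by contradiction. Write $\ell=\ell_{N^r}(p)$ and $m=O_{N^r}(p)$, and assume $p^{\ell}$ is incomplete. By Proposition \ref{prop-primitive-properties}(i) some $p^j$ with $1\le j\le\ell$ is primitive. By the definition of $\ell_{N^r}(p)$ and Proposition \ref{proporder}(iv), $O_{N^r}(p^j)=m$. Set $t=p^j$ and take a nonzero integer cycle $\{x_k\}$ w.r.t. $(N^r,t\D)$. By Proposition \ref{prop-primitive-properties}(ii),(iii) it has minimal period $m$ and $\gcd(x_k,p)=1$. By Theorem \ref{integercycle}(ii) every cycle point lies in
\[
\Bigl(0,\tfrac{t(N-1)}{N^r-1}\Bigr].
\]
The common idea in both cases is to exhibit a multiplier $u$ with $u\equiv -1 \pmod t$ such that $ux_0 \bmod t$ is (or behaves like) a cycle point. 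Since $x_0+(t-x_0)=t$, a bound on both terms will then give a contradiction.

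\emph{Subcase $m$ even.} Since $p^j\mid N^{rm}-1=(N^{rm/2}-1)(N^{rm/2}+1)$ and $p$ is odd, $p^j$ divides exactly one factor. Minimality of $m$ excludes the first, so $N^{rm/2}\equiv-1\pmod{p^j}$. By Lemma \ref{lem4.1}, $N^{rm/2}x_0 \bmod t = t-x_0$ is a cycle point; it is nonzero because $p\nmid x_0$. Hence
\[
t=x_0+(t-x_0)\le \frac{2t(N-1)}{N^r-1}.
\]
This is impossible for $r\ge2$, because $N^r-1\ge (N-1)(N+1)>2(N-1)$.

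\emph{Subcase $r$ even, $m$ odd.} Now $-1\notin\langle N^r\rangle \bmod p^j$, so Lemma \ref{lem4.1} alone is not enough, and we pass to powers of $N$ itself. By Proposition \ref{proporder}(ii), $O_N(p^j)=m\,g$ with $g=\gcd(O_N(p^j),r)$. If $g$ is even, the same factorization argument gives $N^{e}\equiv-1\pmod{p^j}$ with $e=mg/2$, and $e\not\equiv0\pmod r$ because $m$ is odd.

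Next I use the digit structure. By Theorem \ref{integercycle}(ii), $x_0=t\,y$ where $y\in[0,1)$ has a purely periodic base-$N$ expansion whose nonzero digits sit only at positions $\equiv0\pmod r$. Then $N^ex_0 \bmod t = t\{N^ey\}$, and the nonzero digits of $\{N^ey\}$ sit only at positions $\equiv -e\pmod r$. The smallest such position $s$ satisfies $1\le s\le r-1$. This gives
\[
\{N^ey\}\le \frac{(N-1)N^{r-s}}{N^r-1}\le \frac{(N-1)N^{r-1}}{N^r-1}.
\]
On the other hand $\{N^e y\}=1-y$ because $N^ex_0\equiv -x_0$. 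Adding the two bounds,
\[
1\le \frac{(N-1)(N^{r-1}+1)}{N^r-1},
\]
which fails whenever $N<N^{r-1}$, that is, for $r\ge3$. The boundary case $r=2$ needs a sharper count: there the only admissible shift is $s=1$, and equality would force all digits to equal $N-1$, which I expect to contradict $t>\frac{N^r-1}{N-1}$ or the hypothesis $\log_p\frac{N^r-1}{N-1}\notin\N$.

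\emph{Main obstacle.} The remaining case is $r$ even with $O_N(p^j)$ odd. Here $-1$ is not a power of $N$ modulo $p$, so the reflection trick has nothing to act on. My first attempt would be to replace $-1$ by the averaging identity $\sum_{i}N^{ri}x_0\equiv0$ over the subgroup generated by $N^r$, combined with the digit-position restriction, to bound the sum of all cycle points. If that fails, I would try to reduce this configuration to Lemma \ref{Ai} through a factorization of $t$, or to show that it cannot occur under the stated hypotheses.
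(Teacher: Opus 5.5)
Your case $O_{N^r}(p)$ even is correct and is the paper's argument. In the case $r$ even, $m$ odd, the branch $g$ even also works. Your digit-position bound is valid, and for $r=2$ equality would force $x_0=t/(N+1)$, i.e. $N+1=\frac{N^2-1}{N-1}$ divides $p^j$, which the hypothesis forbids. The genuine gap is the branch you call the main obstacle: $r$ even, $m$ odd, $O_N(p^j)$ odd. There you offer only candidate strategies, not an argument. The branch is also not vacuous: for $N=2$, $r=2$, $p=7$ one has $O_4(7)=O_2(7)=3$, $p>N$ and $\log_7 3\notin\mathbb N$. So as written the proposal does not prove the statement.

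The missing idea is to use $N^{r/2}$ instead of $-1$, together with a \emph{maximal} cycle point. Take $x_0=\max_k x_k$. Both $(N^r)^{(m+1)/2}$ and $N^{r/2}$ square to $N^r$ modulo $p^j$. Since $p\nmid 2N^{r/2}$, $p$ cannot divide both $N^{r(m+1)/2}-N^{r/2}$ and $N^{r(m+1)/2}+N^{r/2}$, hence $N^{r(m+1)/2}\equiv\pm N^{r/2}\pmod{p^j}$.

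In your open branch the sign is $+$, because an odd $O_N(p^j)$ dividing $rm$ also divides $rm/2$. By Lemma \ref{lem4.1}, the residue of $N^{r/2}x_0$ modulo $t$ is then a cycle point. Since $N^{r/2}(N-1)<N^r-1$, we have $N^{r/2}x_0<t$, so this cycle point is $N^{r/2}x_0>x_0$, contradicting maximality. With the sign $-$, $t-N^{r/2}x_0$ is a cycle point exceeding $\frac{t(N-1)}{N^r-1}$; when $r=2$ the hypothesis again supplies strictness.

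This single dichotomy settles the whole case $r$ even, $m$ odd, and it is exactly the paper's proof; neither the averaging identity nor Lemma \ref{Ai} is needed. Your own digit framework would also close the branch. Since $N^{r/2}$ lies in $\langle N^r\rangle$ modulo $p^j$, $t\{N^{r/2}y\}$ would be a nonzero cycle point whose base-$N$ digits sit only at positions $\equiv r/2\pmod r$. That contradicts the digit structure from Theorem \ref{integercycle}(ii), by uniqueness of base-$N$ expansions of nonzero numbers with denominator prime to $N$.
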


\begin{proof}
	By Proposition \ref{proporder} \textup{(iv)}, $$O_{N^r}(p^{\ell_{N^r}(p)})=O_{N^r}(p).$$
	Suppose that $p^{\ell_{N^r}(p)}$ is incomplete and $\{x_k\}_{k=0}^\infty$ is a nonzero integer cycle w.r.t. $(N^r,p^{\ell_{N^r}(p)}\D)$. 
	The hypothesis on the prime $p$ implies that $\frac{N-1}{N^r-1}p^{\ell_{N^r}(p)}\notin\Z$. In fact, If $\frac{N^r-1}{N-1}\mid p^{\ell_{N^r}(p)}$, then $p^{j_0}=\frac{N^r-1}{N-1}$ for some positive integer $j_0\le\ell_{N^r}(p)$. Therefore, $j_0=\log_p\frac{N^r-1}{N-1}\in\N$. A contradiction. Then Theorem \ref{integercycle} (ii) implies that 
	\begin{equation*}
		x_k\in\left(0,\frac{N-1}{N^r-1}p^{\ell_{N^r}(p)} \right),\quad\forall k\in\N.
	\end{equation*}
	Without loss of generality, we let $x_0=\max_{k\in\N}x_k$.
	
	Consider the first case in which $O_{N^r}(p)$ is even. Then
	$$\left( N^{r\frac{O_{N^r}(p)}{2}}-1\right) \left( N^{r\frac{O_{N^r}(p)}{2}}+1\right) = N^{r{O_{N^r}(p)}}-1\equiv 0\pmod{p^{\ell_{N^r}(p)}}.$$
	By the definition of $O_{N^r}(p^{\ell_{N^r}(p)})$, the only solution of this equation is $N^{r\frac{O_{N^r}(p)}{2}}\equiv -1\pmod{p^{\ell_{N^r}(p)}}$. 
	According to Lemma \ref{lem4.1},
	$$N^{r\frac{O_{N^r}(p)}{2}}x_0\pmod{p^{\ell_{N^r}(p)}}=-x_0\pmod{p^{\ell_{N^r}(p)}}$$
	is also an integer cycle point. But
	$$-x_0+p^{\ell_{N^r}(p)}\ge -\frac{N-1}{N^r-1}p^{\ell_{N^r}(p)}+p^{\ell_{N^r}(p)}>\frac{N-1}{N^r-1}p^{\ell_{N^r}(p)}.$$
	This implies the integer cycle point $$-x_0\pmod{p^{\ell_{N^r}(p)}}>\frac{N-1}{N^r-1}p^{\ell_{N^r}(p)}.$$
	Hence, this contradicts the conclusion of Theorem \ref{integercycle} (ii) that $x_k\in\left(0,\frac{N^r-1}{N-1}p^{\ell_{N^r}(p)}\right]$.
	
	It remains to consider the case where $O_{N^r}(p)$ is odd and $r$ is even. Then
	$$\left( N^{r\frac{O_{N^r}(p)+1}{2}}+N^{\frac{r}{2}}\right) \left( N^{r\frac{O_{N^r}(p)+1}{2}}-N^{\frac{r}{2}}\right)= N^{r(O_{N^r}(p)+1)}-N^r\equiv0\pmod{p^{\ell_{N^r}(p)}}.$$
	Noticing that $p\nmid 2N^{\frac{r}{2}}$ since $p>N$, we have
	$$p\nmid\gcd\left( N^{r\frac{O_{N^r}(p)+1}{2}}-N^{\frac{r}{2}},N^{r\frac{O_{N^r}(p)+1}{2}}+N^{\frac{r}{2}}\right).$$
	Therefore one of the two factors must be divisible by $p^{\ell_{N^r}(p)}$, namely,
	$$N^{r\frac{O_{N^r}(p)+1}{2}}\equiv -N^{\frac{r}{2}}\pmod{p^{\ell_{N^r}(p)}}\quad\text{or}\quad N^{r\frac{O_{N^r}(p)+1}{2}}\equiv N^{\frac{r}{2}}\pmod{p^{\ell_{N^r}(p)}}.$$
	If $N^{r\frac{O_{N^r}(p)+1}{2}}\equiv -N^{\frac{r}{2}}\pmod{p^{\ell_{N^r}(p)}}$, then 
	$$N^{r\frac{O_{N^r}(p)+1}{2}}x_0\pmod{p^{\ell_{N^r}(p)}}=-N^{\frac{r}{2}}x_0\pmod{p^{\ell_{N^r}(p)}}$$
	is also an integer cycle point by Lemma \ref{lem4.1}. Therefore,
	\begin{eqnarray*}
		-N^{\frac{r}{2}}x_0+p^{\ell_{N^r}(p)}&>&-N^{\frac{r}{2}}\cdot\frac{N-1}{N^r-1}p^{\ell_{N^r}(p)}+p^{\ell_{N^r}(p)}\\
		&=&\left(\frac{N^r-1}{N-1}-N^{\frac{r}{2}}\right)\cdot \frac{N-1}{N^r-1}p^{\ell_{N^r}(p)}\ge\frac{N-1}{N^r-1}p^{\ell_{N^r}(p)}.
	\end{eqnarray*}
	Consequently, the integer cycle point $$-N^{\frac{r}{2}}x_0\pmod{p^{\ell_{N^r}(p)}}>\frac{N-1}{N^r-1}p^{\ell_{N^r}(p)}.$$ This contradicts Theorem \ref{integercycle} (ii). If $N^{r\frac{O_{N^r}(p)+1}{2}}\equiv N^{\frac{r}{2}}\pmod{p^{\ell_{N^r}(p)}}$, then 
	$$N^{r\frac{O_{N^r}(p)+1}{2}}x_0\pmod{p^{\ell_{N^r}(p)}}=N^{\frac{r}{2}}x_0\pmod{p^{\ell_{N^r}(p)}}$$
	is also an integer cycle point. Since
	\begin{eqnarray*}
		N^{\frac{r}{2}}x_0-p^{\ell_{N^r}(p)}< N^{\frac{r}{2}}\cdot\frac{N-1}{N^r-1}p^{\ell_{N^r}(p)}-p^{\ell_{N^r}(p)}\le 0,
	\end{eqnarray*}
	it follows that the integer cycle point $$N^{\frac{r}{2}}x_0\pmod{p^{\ell_{N^r}(p)}}=N^{\frac{r}{2}}x_0>x_0.$$
	This contradicts the maximality of $x_0$. Hence complete the proof.
\end{proof}

	\subsection{Proof of Case (ii)}
	\begin{prop}\label{theomain2}
	Let $p$ be a prime such that $\gcd(p,N)=1$. If $r$ and $O_N(p)$ are odd and $r\nmid O_N(p)$, then $p^{\ell_{N^r}(p)}$ is complete.
\end{prop}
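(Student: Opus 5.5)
The plan is to argue by contradiction, in the same way as the proof of Proposition \ref{theocase1}: assume a nonzero cycle exists, pick its largest point, and produce a strictly larger cycle point. This time the larger point comes from a power $N^g$ with $g$ a proper divisor of $r$, rather than from $-1$ or $N^{r/2}$.

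Set $t=p^{\ell_{N^r}(p)}$ and $m=O_N(p)$, which is odd by hypothesis. Suppose $t$ is incomplete. Then by Theorem \ref{integercycle} (i),(ii) there is a nonzero integer cycle $\{x_k\}_{k=0}^\infty$ w.r.t. $(N^r,t\mathcal D)$, and all its points lie in $\bigl(0,\frac{N-1}{N^r-1}t\bigr]$. As before, choose $x_0=\max_k x_k$.

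The first step is to compute $O_N(t)$. Write $r=p^js$ with $p\nmid s$. Proposition \ref{proporder} (v) gives $\ell_{N^r}(p)=\ell_N(p)+j$, and then (iv) gives $O_N(t)=m\,p^{j}$. Let $g=\gcd(O_N(t),r)$. By Proposition \ref{proporder} (ii), the cyclic group $\{N^{rk}\bmod t\}$ is the subgroup of $\langle N\bmod t\rangle$ generated by $N^{g}$. So there is a $k$ with
\[
N^{rk}\equiv N^{g}\pmod t .
\]
By Lemma \ref{lem4.1}, $N^{g}x_0\pmod t$ is then an integer cycle point.

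Now suppose $g<r$. Since $r$ is odd, every proper divisor of $r$ is at most $r/3$, so $g\le r/3\le r-1$. This gives
\[
N^{g}x_0\le N^{g}\frac{N-1}{N^r-1}\,t<t,
\]
so the residue $N^gx_0\pmod t$ equals $N^{g}x_0$ itself. But $N^gx_0>x_0$ because $x_0>0$, which contradicts the maximality of $x_0$. Hence $t$ is complete. Unlike Proposition \ref{theocase1}, this step does not need $p>N$ or the condition $\log_p\frac{N^r-1}{N-1}\notin\mathbb N$, because only the closed upper bound on the cycle points is used.

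\textbf{Main obstacle: showing $g<r$.} The inequality $g<r$ holds exactly when $r\nmid m p^{j}$, i.e. when $s\nmid m$.
\begin{itemize}
\item If $p\nmid r$, then $j=0$, and the hypothesis $r\nmid O_N(p)$ is precisely what is needed.
\item If $p\mid r$ and $s\mid m$, then $g=r$ and the argument above gives nothing.
\end{itemize}
In that second case I would first try to replace $N$ by $N^{p^j}$. Put $N'=N^{p^j}$, so that $N^r=(N')^s$. I would then rerun the argument with $r$ replaced by $s$, looking for a proper divisor $g'$ of $s$ and the point $(N')^{g'}x_0$. However, the needed inequality $(N')^{g'}(N-1)<N^r-1$ is not automatic, since $p^{j}g'$ may be comparable to $r$.

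Failing that, I would use the fact that $m$ is odd together with $m\mid p-1$ (Proposition \ref{proporder} (iii)). This gives $p>m\ge s$. From there I would either show that the subcase is vacuous under the standing hypotheses, or handle it by combining the argument with Lemma \ref{Ai}, as in Proposition \ref{plnrp}. This last case is where I expect the real work to be.
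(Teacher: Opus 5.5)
Your argument for $p\nmid r$ is correct and is essentially the paper's Case 2. You multiply $x_0$ by $N^{\gcd(O_N(p),r)}$ where the paper uses $N^{r-\alpha}$ with $\alpha\equiv O_N(p)\pmod r$; both are powers $N^e$ with $0<e<r$ lying in $\langle N^r \bmod t\rangle$.

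However, the subcase $p\mid r$, $s\mid O_N(p)$ is left open, and this is a genuine gap. It is not vacuous: for $N=2$, $p=r=7$ one has $O_2(7)=3$, $t=7^{\ell_{2^7}(7)}=49$ and $O_2(49)=21$, so $g=\gcd(21,7)=r$. No variant of your maximal-point trick can work there. Since $r\mid O_N(t)=O_N(p)p^j$, a congruence $N^{rk}\equiv N^e\pmod t$ forces $r\mid e$, so $\langle N^r \bmod t\rangle$ contains no $N^e$ with $0<e<r$. The substitution $N'=N^{p^j}$ does not escape this, since $(N')^{g'}=N^{p^jg'}$. Lemma \ref{Ai} does not help either. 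For $i\le\ell_{N^r}(p)$ one has $O_{N^r}(p^i)=O_{N^r}(p)=O_N(p)/s$, and this can equal $1$ (e.g.\ $N=2$, $p=7$, $r=21$), which is below the lemma's threshold.

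The missing idea is that when $p\mid r$ no cycle argument is needed: $t$ is simply smaller than $\frac{N^r-1}{N-1}$, so Corollary \ref{corolenr-1} applies. Here $p$ is odd, since it divides the odd number $r$. Then $m=O_N(p)$ is an odd divisor of the even number $p-1$, hence $m\le\frac{p-1}{2}$; this is the sharpening your observation $p>m$ needs. Also $p^{\ell_N(p)}\mid N^{m}-1$. Applying AM--GM to $1,N,\dots,N^{p^j-1}$ gives $\frac{N^{p^j}-1}{N-1}\ge p^jN^{(p^j-1)/2}\ge p^jN^{(p-1)/2}$, so
\[
t=p^j\,p^{\ell_N(p)}\le p^j\bigl(N^{(p-1)/2}-1\bigr)<\frac{N^{p^j}-1}{N-1}\le\frac{N^r-1}{N-1}.
\]
This is the paper's Case 1. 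It covers every $p\mid r$; there $r\nmid O_N(p)$ holds automatically because $p\nmid O_N(p)$. Together with your argument for $p\nmid r$, it completes the proof.
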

\begin{proof}
	We write $r=p^ks$ for some $k\geq 0,s\in\N$ and $p\nmid s$. Then we have $\ell_{N^r}(p)=\ell_N(p)+k$ by Proposition \ref{proporder} (v).
	
	Case 1: $k\ge 1$. The function $f(x)=\frac{N^x-1}{x}$ is increasing on $[3,\infty)$ when $N\ge2$. Then for any odd prime $p$, by AM-GM inequality, we always have that
	\begin{eqnarray}\label{eq6.5}
		N^{\frac{p-1}{2}}=\Bigg(\prod_{i=0}^{p-1}N^i \Bigg)^{\frac{1}{p}}\le\frac{1}{p}\sum_{i=0}^{p-1}N^i=\frac{f(p)}{N-1}\le \frac{f(p^k)}{N-1}=\frac{N^{p^k}-1}{p^k(N-1)}\le \frac{N^{p^{ks}}-1}{p^k(N-1)}.
	\end{eqnarray}
	Fermat's little theorem shows that $O_N(p)\mid p-1$. Noticing that $O_N(p)$ and $p$ are odd numbers, then $O_N(p)\le\frac{p-1}{2}$.
	By the definition of $\ell_{N}(p)$, we conclude that $p^{\ell_{N}(p)}\mid N^{O_N(p)}-1$. According to \eqref{eq6.5},
	$$p^{\ell_{N^r}(p)}= p^k\cdot p^{\ell_{N}(p)}\le p^k\cdot \left( N^{O_N(p)}-1\right)\le p^k\cdot \left( N^{\frac{p-1}{2}}-1\right)< p^k\cdot \frac{N^{p^ks}-1}{p^k(N-1)}=\frac{N^r-1}{N-1}.$$
	Corollary \ref{corolenr-1} implies that $p^{\ell_{N^r}(p)}$ is a complete number.
	
	Case 2: $k=0$ and $r\nmid O_N(p)$. We have
	$$O_N(p^{\ell_{N^r}(p)})=O_N(p^{\ell_{N}(p)})=O_N(p)=\beta r+\alpha$$
	for some $\alpha,\beta\in\N$ and $1\le\alpha\le r-1$. Then $$N^{r(\beta+1)}\equiv N^{r-\alpha}\pmod {p^{\ell_{N^r}(p)}}.$$
	Suppose that $p^{\ell_{N^r}(p)}$ is incomplete and $\{x_k\}_{k=0}^\infty$ is a nonzero integer cycle w.r.t. $(N^r,p^{\ell_{N^r}(p)}\D)$. We let $x_0=\max_{k\in\N}x_k$. Then 
	$$N^{r(\beta+1)}x_0\pmod{p^{\ell_{N^r}(p)}}=N^{r-\alpha}x_0\pmod{p^{\ell_{N^r}(p)}}$$
	is also an integer cycle point according to Lemma \ref{lem4.1}. Since
	\begin{eqnarray*}
		N^{r-\alpha}x_0-p^{\ell_{N^r}(p)}&<& N^{r-\alpha}\cdot\frac{N-1}{N^r-1}p^{\ell_{N^r}(p)}-p^{\ell_{N^r}(p)}\\
		&=&-\left(1+N+\cdots+N^{r-1}-N^{r-\alpha}\right)\frac{N-1}{N^r-1}p^{\ell_{N^r}(p)},
	\end{eqnarray*}
	it follows that the integer cycle point $$N^{r(\beta+1)}x_0\pmod{p^{\ell_{N^r}(p)}}=N^{\frac{r}{2}}x_0>x_0.$$ 
	This contradicts the maximality of $x_0$. Hence we reach the conclusion.
\end{proof}

\begin{prop}\label{theomain3}
	Let $p>N$ be a prime and $\log_p\frac{N^r-1}{N-1}\notin\N$. If $O_N(p)=r$, then $p^n$ is complete for any $n\in\N$.
\end{prop}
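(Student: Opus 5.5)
The plan is to reduce everything to the critical power $p^{\ell_{N^r}(p)}$ via Proposition \ref{plnrp}. Then I would show that this power is a divisor of $\frac{N^r-1}{N-1}$ strictly smaller than it, so Corollary \ref{corolenr-1} applies directly. No cycle argument should be needed here: the hypothesis $O_N(p)=r$ makes the order of $N^r$ modulo $p$ trivial, which pins down $\ell_{N^r}(p)$ exactly.

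\textbf{Step 1: identify $\ell_{N^r}(p)$.} Since $O_N(p)=r$, we have $N^r\equiv 1\pmod p$, so $O_{N^r}(p)=1$. By Definition \ref{defi4.1}(ii), $\ell_{N^r}(p)$ is the largest $k$ with $O_{N^r}(p^k)=1$, i.e. with $p^k\mid N^r-1$. Thus $\ell_{N^r}(p)=v_p(N^r-1)$ and $p^{\ell_{N^r}(p)}\mid N^r-1$. This also follows from Proposition \ref{proporder}(iv)--(v), but the direct reading of the definition suffices.

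\textbf{Step 2: remove the factor $N-1$.} We are in the setting $q\ge2$, so $r\ge 2$, and hence $O_N(p)=r\ge2$. Therefore $N\not\equiv 1\pmod p$, i.e. $p\nmid N-1$. Since $N^r-1=(N-1)\cdot\frac{N^r-1}{N-1}$ and $p$ is prime, we get
\[p^{\ell_{N^r}(p)}\ \Big|\ \frac{N^r-1}{N-1}.\]

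\textbf{Step 3: get strict inequality.} The hypothesis $\log_p\frac{N^r-1}{N-1}\notin\mathbb N$ says that $\frac{N^r-1}{N-1}$ is not a power of $p$. Hence the divisor $p^{\ell_{N^r}(p)}$ is proper, and
\[p^{\ell_{N^r}(p)}<\frac{N^r-1}{N-1}.\]
Also $\gcd(p,N)=1$ because $p>N$ is prime. Corollary \ref{corolenr-1} then shows that $p^{\ell_{N^r}(p)}$ is complete.

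\textbf{Step 4: conclude.} Proposition \ref{plnrp}, which requires $p>N$, now gives that $p^n$ is complete for every $n\in\mathbb N^+$. The case $n=0$ is trivial, since $t=1$ is complete by Corollary \ref{corolenr-1}.

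The only delicate point is Step 2. One must make sure that the power of $p$ dividing $N^r-1$ is not partly absorbed by $N-1$, and this is exactly where $r=O_N(p)>1$ is used. Apart from that, the argument is a direct chain of earlier results.
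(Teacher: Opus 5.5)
Your proposal is correct and follows essentially the same route as the paper: show that $p^{\ell_{N^r}(p)}$ divides $\frac{N^r-1}{N-1}$, use the hypothesis that $\frac{N^r-1}{N-1}$ is not a power of $p$ to get strict inequality, apply Corollary \ref{corolenr-1}, and extend to all $p^n$ via Proposition \ref{plnrp}. The differences are cosmetic: you read off $\ell_{N^r}(p)=v_p(N^r-1)$ directly from $O_{N^r}(p)=1$ rather than citing Proposition \ref{proporder}(v), and you get $p\nmid N-1$ from $O_N(p)=r\ge 2$ rather than from $p>N$; both arguments are valid.
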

\begin{proof}
	Since $r=O_N(p)$, $\gcd(r,p)=1$ by Fermat's little Theorem. Then Proposition \ref{proporder} (v) indicates $\ell_{N^r}(p)=\ell_{N}(p)$. Hence $p^{\ell_{N^r}(p)}=p^{\ell_{N}(p)}$ divides $N^r-1$. The prime $p>N$ implies that $\gcd(p,N-1)=1$. Then $p^{\ell_{N^r}(p)}\mid\frac{N^r-1}{N-1}$. According to the condition, $p^j\neq\frac{N^r-1}{N-1}$ for any $j\in\N$. Then $p^{\ell_{N^r}(p)}<\frac{N^r-1}{N-1}$. Therefore, Corollary \ref{corolenr-1} implies that $p^{\ell_{N^r}(p)}$ is complete..
\end{proof}
	\begin{prop}\label{theomain4}
		Let $p>N$ be a prime such that
		$$\log_p\frac{N^r-1}{N-1}\notin\N.$$
		If $r$ is a proper divisor of $O_N(p)$, then $p^{\ell_{N^r}(p)}$ is complete.
	\end{prop}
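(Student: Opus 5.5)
Throughout, write $m=O_N(p)$, so that $m=ru$ with $u>1$. By the standing assumption of Case (ii) both $m$ and $r$ are odd, hence $u$ is odd. Since $r\mid m\mid p-1$, we have $p\nmid r$. Proposition \ref{proporder} (ii) and (v) then give
\[
O_{N^r}(p)=\frac{m}{\gcd(m,r)}=u,\qquad \ell_{N^r}(p)=\ell_N(p)=:\ell .
\]
Put $t=p^{\ell}$. Then $O_N(t)=m$, $O_{N^r}(t)=u$, and $t\mid N^m-1$. Moreover $p\nmid N^d-1$ for every $d<m$, so $\gcd(t,N^d-1)=1$ for every proper divisor $d$ of $m$; in particular $\gcd(t,N^r-1)=1$.

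Suppose $t$ is incomplete. Every proper divisor $p^j$ ($j<\ell$) satisfies $O_{N^r}(p^j)=u=O_{N^r}(t)$. I would first check that $t$ may be taken primitive. Otherwise some $p^j$ with $j<\ell$ is primitive, and I would rerun the argument below for $p^j$; all the order data are the same, so nothing changes.

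For primitive $t$, Proposition \ref{prop-primitive-properties} (iii)--(iv) gives a nonzero cycle of minimal period $u$ and digits $d_0,\dots,d_{u-1}\in\mathcal D$ with
\[
x_0=t\,\frac{D}{N^{m}-1},\qquad D=\sum_{i=0}^{u-1}d_iN^{ir}.
\]
Here $\gcd(x_0,t)=1$, and by the hypothesis $\log_p\frac{N^r-1}{N-1}\notin\mathbb N$ (argued exactly as in Proposition \ref{theocase1}), $0<x_0<t\frac{N-1}{N^r-1}$. Setting $W=(N^m-1)/t$, this reads $D=x_0W$. The number $D$ is an $m$-digit base-$N$ word whose only nonzero digits sit at positions $\equiv 0\pmod r$. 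Multiplying by $N^j$ modulo $N^m-1$ cyclically rotates this word, and it corresponds to $N^jx_0\bmod t$ times $W$. For $j$ a multiple of $r$ the rotated word again has the same support pattern, and it is the word of another cycle point, which is consistent with Lemma \ref{lem4.1}.

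The idea is to exploit rotations by $j\not\equiv 0\pmod r$, which is exactly where $r<m$ is used. Since $\gcd(t,N^d-1)=1$ for proper divisors $d$ of $m$, $W$ is divisible by $\operatorname{lcm}\{N^d-1: d\mid m,\ d<m\}$. Hence $D\equiv 0\pmod{N^d-1}$ for each such $d$. I would take $d=m/q$ with $q$ a prime divisor of $r$, which exists once $r>1$; the case $r=1$ is excluded because $r$ is a proper divisor with $r\nmid$ nothing to prove, and in any event it falls under Proposition \ref{theomain3}-type arguments. Folding the word $D$ into blocks of length $d$ produces a number $E\equiv D\pmod{N^d-1}$. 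Because $q\mid r$ and $q\nmid u$ may fail, I would track which residues mod $d$ the positions $ir$ hit. The aim is a folded number $0<E<N^d-1$, which would be a contradiction. In parallel I would use the size bound $x_0<t/R$, where $R=\frac{N^r-1}{N-1}$: it says $D<W\cdot t/R=(N^m-1)/R$, so the top $r-1$ base-$N$ digits of $D$ vanish. The same holds for every rotation by a multiple of $r$, i.e. for all cycle points. This is the analogue of the maximality trick used in Propositions \ref{theocase1} and \ref{theomain2}.

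The main obstacle is the folding step. After folding, a digit can collect up to $q$ contributions of size $N-1$ and overflow, so $E<N^d-1$ is not automatic. If this fails, my first fallback is the direct analogue of Proposition \ref{theomain2}. I would look for an exponent $j=r\beta$ with $N^{r\beta}\equiv N^{s}\pmod t$ for some $0<s<r$, or $N^{r\beta}\equiv -N^s$. By Lemma \ref{lem4.1}, $N^sx_0\bmod t$ would then be a cycle point exceeding the maximal $x_0$, or lying outside the allowed interval. Such a congruence needs $N^{s}$ to lie in the subgroup $\langle N^r\rangle$ modulo $t$. That subgroup has index $r$ in $\langle N\rangle$, so this fails as stated; the fallback would therefore have to use Lemma \ref{Ai} with the factorization $t=p^{\ell}$ in order to bound $u$ against $R$. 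The expected key inequality for that route is $u\ge R$, which would let the sum-of-orbit argument close. To see that route, note that $\sum_{i<u}N^{ir}\equiv 0\pmod t$ because $p\nmid N^r-1$. Hence the $u$ distinct cycle points sum to a positive multiple of $t$, and each of them is less than $t/R$. It follows that $u>R$ is forced, and the remaining task would be to show that $u>R$ is incompatible with $t$ being primitive.
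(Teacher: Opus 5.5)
Your setup is the same as the paper's. You pass to a primitive $p^{j}$ and write $x_0=tD/(N^m-1)$ with $D=\sum_{i<u}d_iN^{ir}$. You use $\gcd(t,N^d-1)=1$ for proper divisors $d$ of $m$ to get $N^d-1\mid D$, and then fold $D$ modulo $N^d-1$.

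However, you stop at exactly the step that constitutes the proof. You call the bound $0<E<N^d-1$ ``not automatic'' because of overflow and move to fallbacks, and neither closes. The first you refute yourself. The second only yields $u>R$, which is not a contradiction. Moreover, Lemma \ref{Ai} cannot be invoked when the primitive number is $p$ itself, since it needs a splitting $st$ with $s,t>1$. As written, the proposal does not prove the statement.

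Two side remarks. The case $r=1$ needs no special treatment: it is excluded by the hypothesis, since $\log_p1=0\in\N$. The parity assumption you import from Case (ii) is not part of the statement and is not needed.

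The missing estimate is short. Take $d=u=m/r$, which is a proper divisor of $m$ because $r>1$; this is the paper's choice. Modulo $N^u-1$ one has $N^{ir}\equiv N^{ir\bmod u}$. The map $i\mapsto ir\bmod u$ sends $\Z_u$ exactly $g$-to-one onto $g\Z_u$, where $g=\gcd(r,u)$. Hence
\[
0<E\le g(N-1)\sum_{j=0}^{u/g-1}N^{gj}=\frac{g(N-1)}{N^g-1}\,(N^u-1)\le N^u-1,
\]
because $g(N-1)\le N^g-1$, with equality only for $g=1$. The collisions you feared land only on positions spaced $g$ apart, and $g$ copies of $N-1$ fit into $g$ base-$N$ digits.

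Since $N^u-1\mid E$, equality is forced, so $g=1$ and every $d_i=N-1$. This gives $x_0=t\frac{N-1}{N^r-1}$, contradicting the strict bound $x_0<t\frac{N-1}{N^r-1}$ that you already derived from $\log_p\frac{N^r-1}{N-1}\notin\N$.

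Your own choice $d=m/q$ also works by the same count. Writing $r=qr'$, the positions become $r'(qi\bmod u)$.
\begin{itemize}
\item If $q\nmid u$, there are no collisions and $E\le\frac{N-1}{N^{r'}-1}(N^d-1)$; one concludes as above.
\item If $q\mid u$, the $q$-fold collisions sit at multiples of $r$, so $E\le\frac{q(N-1)}{N^r-1}(N^d-1)<N^d-1$.
\end{itemize}
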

	
\begin{proof}
	Set $ \beta:=O_{N^r}(p)=\frac{O_N(p)}{r}>1$. By Proposition \ref{proporder} (ii), we have
	$$O_{N^r}(p^{\ell_{N^r}(p)}) = O_{N^r}(p) = \beta.$$
	It then follows that $p^{\ell_{N^r}(p)} \mid N^{\beta r}-1$.
	Since $\beta<O_N(p)$, the definition of $O_N(p)$ gives $p\nmid N^\beta-1.$
	Suppose to the contrary that $p^{\ell_{N^r}(p)}$ is incomplete. By
	Proposition \ref{prop-primitive-properties}, there exists an integer $1\le j_0\le{\ell_{N^r}(p)}$ such that $p^{j_0}$ is primitive. Since
	$j_0\le{\ell_{N^r}(p)}$, $O_{N^r}(p^{j_0})=O_{N^r}(p)=\beta$.
	Proposition \ref{prop-primitive-properties} therefore yields digits $d_0,\ldots,d_{\beta-1}\in\D$,
	not all zero, such that
	\begin{equation}\label{eq:cycle-beta}
		p^{j_0} \cdot \frac{\sum_{i=0}^{\beta-1} d_i N^{ir}}{N^{\beta r} - 1} \in \mathbb{Z}.
	\end{equation}
	
	We let $M:=\frac{N^{\beta r}-1}{N^\beta-1}.$
	As $\beta<O_N(p)$, it follows that $p \nmid N^\beta - 1$. Since $p$ is a prime, we have $\gcd(p^{j_0}, N^\beta - 1) = 1$. Because
	$$\frac{(N^\beta - 1)M}{p^{j_0}} = \frac{N^{\beta r} - 1}{p^{j_0}} \in \mathbb{Z},$$
	we have $p^{j_0} \mid M$. From \eqref{eq:cycle-beta}, we obtain
	\begin{eqnarray}\label{eq:reduced-beta}
		\frac{\sum_{i=0}^{\beta-1} d_i N^{ir}}{N^\beta - 1} \in \mathbb{Z}.
	\end{eqnarray}
	For $0 \le i,j \le \beta-1$, we assume without loss of generality that $ir > j$. It is easy to see that $N^{ir} \equiv N^j \pmod{N^\beta - 1}$ if $\beta \mid ir - j$. Therefore, $N^{ir} \pmod{N^\beta - 1} = N^{ir \pmod{\beta}}$.
	
	Consider the map $\varphi=rx\pmod\beta$ from $\mathbb{Z}_\beta$ to $\mathbb{Z}_\beta$. Let $d=\gcd(\beta,r)$ and denote $\beta'=\frac{\beta}{d}$.
	Writing $\beta=d\beta'$ and $r=dr'$, we have
	$\gcd(r',\beta')=1$. Then $\varphi$ is a $d$-to-$1$ map and
	$${\rm Im}(\varphi)=d\left\{0,1,\ldots,\beta'-1\right\}.$$
	That is, for any $1\le j\le \beta'-1$, $\varphi^{-1}(dj)$ has $d$ elements. Then we have
	$$\sum_{i=0}^{\beta-1} d_i N^{ir} \equiv \sum_{j=0}^{\beta'-1} \sum_{k\in\varphi^{-1}(dj)} d_{k} N^{dj} \pmod{N^\beta - 1}.$$
	Together with \eqref{eq:reduced-beta}, this gives \begin{equation*}
		\frac{\sum_{j=0}^{\beta'-1} \sum_{k\in\varphi^{-1}(dj)} d_{k} N^{dj}}{N^\beta-1}\in\mathbb{Z}.
	\end{equation*}
	However,
	$$\sum_{j=0}^{\beta'-1} \sum_{k\in\varphi^{-1}(dj)} d_{k} N^{dj}\le d\left( N-1\right) \sum_{j=0}^{\beta'-1}N^{dj}=\frac{d(N-1)}{N^d-1}\left( N^\beta-1\right).$$
	This forces that the equality holds i.e. $d=1$ and $d_i=N-1$ for all $0\le i\le \beta-1$. It then follows from \eqref{eq:cycle-beta} that 
	$$\frac{p^{j_0}(N-1)}{N^r-1} \in \mathbb{Z}.$$
	Then we have $\frac{N^r-1}{N-1}\mid p^{j_0}$. Since $p$ is prime, we can further deduce that $p^{k_0}=\frac{N^r-1}{N-1}$ for some integer $1\le k_0\le j_0$. Thus
	$\log_p\frac{N^r-1}{N-1}=k_0\in\N$. A contradiction.
	
	Hence $p^{\ell_{N^r}(p)}$ is complete.
\end{proof}

Combining Theorem \ref{theomain} and Corollary \ref{corolenr-1}, we have that
\begin{coro}
	Let $p\nmid N$ be a prime and $p\neq\frac{N^r-1}{N-1}$. Then $p$ is complete.
\end{coro}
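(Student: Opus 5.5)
The corollary follows by splitting according to the size of $p$ relative to $N$ and to $\frac{N^r-1}{N-1}$. We assemble the pieces already proved rather than argue anything new.

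First suppose $p<\frac{N^r-1}{N-1}$. Since $p\nmid N$, we have $\gcd(p,N)=1$, and $p$ lies in the range covered by Corollary \ref{corolenr-1}. So $p$ is complete. The hypothesis $p\neq\frac{N^r-1}{N-1}$ rules out the boundary case.

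It remains to treat $p>\frac{N^r-1}{N-1}$. Because $r\ge 2$, we have $\frac{N^r-1}{N-1}=1+N+\cdots+N^{r-1}>N$, and hence $p>N$. Moreover, $p^1$ already exceeds $\frac{N^r-1}{N-1}\ge 1+N$, so no power $p^m$ with $m\ge 1$ equals $\frac{N^r-1}{N-1}$. The value $m=0$ is also impossible, since $\frac{N^r-1}{N-1}>1$. Therefore $\log_p\frac{N^r-1}{N-1}\notin\mathbb N$.

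We are then in the setting of Theorem \ref{theomain}(1), equivalently the reduced theorem stated at the start of Section \ref{secproof}. Applying it with $n=1$ shows that $p$ is complete.

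Some bookkeeping is needed for that theorem. Proposition \ref{plnrp} reduces everything to completeness of $p^{\ell_{N^r}(p)}$, and the case analysis then gives this:
\begin{itemize}
\item Proposition \ref{theocase1} covers the case where $O_{N^r}(p)$ or $r$ is even.
\item If both are odd, Proposition \ref{proporder}(ii) gives that $O_N(p)$ is odd.
\item Proposition \ref{theomain2} then covers $r\nmid O_N(p)$.
\item Proposition \ref{theomain3} covers $r=O_N(p)$.
\item Proposition \ref{theomain4} covers the case where $r$ is a proper divisor of $O_N(p)$.
\end{itemize}
These cases exhaust all possibilities.

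The only point needing care is confirming that the hypotheses $p>N$ and $\log_p\frac{N^r-1}{N-1}\notin\mathbb N$ hold in the second regime, which the inequalities above settle. I expect no real obstacle beyond checking that the case split in Section \ref{secproof} is exhaustive, which it is by construction.
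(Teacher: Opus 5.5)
Your proposal is correct and matches the paper's argument. The paper also gets this corollary by combining Corollary \ref{corolenr-1}, which handles $p<\frac{N^r-1}{N-1}$, with Theorem \ref{theomain}(1) applied with $n=1$, which handles $p>\frac{N^r-1}{N-1}$; in that second range $r\ge 2$ forces $p>N$ and $0<\log_p\frac{N^r-1}{N-1}<1$, exactly as you check. Your recap of the case analysis behind Theorem \ref{theomain}(1) is accurate but not needed for the corollary itself.
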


\section{Examples}\label{sec5}

In this section, we first prove Theorem~\ref{theCHW} by combining
Theorem~\ref{theomain} with Mih\u{a}ilescu's theorem. We then illustrate the
non-exceptional and exceptional alternatives in Theorem~\ref{theomain} through
several explicit examples. Finally, we present two examples outside the
power-base setting $b=N^r$, showing that large incomplete primes may occur for
more general self-similar spectral pairs.

\medskip
\noindent\textbf{The case of $N=2$.} We begin with the consequence stated in Theorem~\ref{theCHW}.
\begin{proof}[Proof of Theorem \ref{theCHW}]
Mih\u{a}ilescu's theorem states that the only solution of
\[x^u-y^v=1,\quad\text{in integers } x,y,u,v\ge2,\]
is $3^2-2^3=1$. Consequently, for any prime $p\ge3$, $p^j$ cannot be of the form $2^r-1$ for any integers $j\ge2$ and $r\ge2$. Therefore,
\[\log_p\frac{N^r-1}{N-1}\notin\mathbb N.\]
Hence, Theorem \ref{theomain} implies that $p^n$ is complete for any $n\in\mathbb N^+$.
\end{proof}

\noindent\textbf{More Examples on $\left( \mu_{N^r,\mathcal D}, \Lambda_{N^r,N^{r-1}\mathcal D}\right) $.}

\begin{exam}\label{exam527}
	Let $N=5$, $r=2$. 
	\begin{enumerate}
		\item [(i)] For $p=7$, $\log_p\frac{N^r-1}{N-1}=\log_76\notin\N.$ Theorem \ref{theomain} \textup{(1)}
		implies that $7^n$ is complete for any $n\in\mathbb N^+$.
		\item [(ii)] $2$ and $2^2$ are complete by Corollary \ref{corolenr-1}, whereas $2^3$ is incomplete since
		\[1=\frac{1+2^3\cdot3}{25},\]
		and $\{1\}_{k=0}^{\infty}$ is an integer cycle with respect to $(25,2^3\{0,1,\ldots,4\})$.
	\end{enumerate}
\end{exam}

\begin{exam}
	Let $N=2$, $r=2$, and $p=3$. Then $\log_p\frac{N^r-1}{N-1}=1$.
	By Theorem \ref{theomain} \textup{(2)}, $5^n$ is incomplete for any $n\in\mathbb N^+$.
\end{exam}

\begin{exam}
	Let $N=3$, $r=5$, and $p=11$. Noticing that
	$$\frac{3^5-1}{3-1}=11^2$$
	is a solution of  Nagell-Ljunggren equation.
	Then $\log_p\frac{N^r-1}{N-1}=2$.
	Therefore, by Theorem \ref{theomain} \textup{(2)}, $11$ is complete and $11^n$ is incomplete for any integer $n\ge2$. 
\end{exam}

\noindent\textbf{Examples on  $\left( \mu_{b,q\mathcal D},\Lambda_{b,\mathcal D}\right) $.} These examples show that, $b\neq N^r$, even after excluding the trivial cases, some prime numbers are not complete.

\begin{exam}\label{exam2.12}
For the case $b=6$, $N=3$ and let $p=760891$. Then $p$ is a prime number and there exists a non-zero integer cycle w.r.t. $(6,p\{0,1,2\})$. We list the terms of a period in order:
\begin{equation}\nonumber
	\begin{split}
		802,253764,42294,&7049,127990,274962,45827,\\&134453,149224,278501,173232,28872,4812.
	\end{split}
\end{equation}
Then the prime number $p$ is an incomplete number of the spectral pair $(\mu_{6,\{0,1,2\}},\Lambda_{6,\{0,1,2\}})$.
\end{exam}

The following example is due to Dutkay and Kraus
\cite[Remark 2.14]{DK18}.

\begin{exam}\label{exam2.13}
	For the case $b=6$, $N=2$ and let $p=55987$. There exists a non-zero integer cycle w.r.t. $(6,p\{0,1\})$. Terms of a period in order are
	$$311,9383,10895,11147,11189,11196,1866$$
	and then the prime number $p$ is an incomplete number of the spectral pair $(\mu_{6,\{0,3\}},\Lambda_{6,\{0,1\}})$.
\end{exam}


\end{document}